\theoremstyle{plain}
\newtheorem{theorem}{Theorem}[section]
\newtheorem{lemma}[theorem]{Lemma}
\newtheorem{corollary}[theorem]{Corollary}
\newtheorem{proposition}[theorem]{Proposition}
\theoremstyle{definition}
\newtheorem{remark}[theorem]{Remark}
\newtheorem{example}[theorem]{Example}
\numberwithin{equation}{section}
\newcommand{\BC}{{\mathbb C}}
\newcommand{\BR}{{\mathbb R}}
\newcommand{\cG}{{\mathcal G}}
\newcommand{\cX}{{\mathcal X}}
\newcommand{\cZ}{{\mathcal Z}}
\newcommand{\wtilA}{\widetilde{A}}\newcommand{\wtilB}{\widetilde{B}}
\newcommand{\whatA}{\widehat{A}}\newcommand{\whatB}{\widehat{B}}
\newcommand{\whatE}{\widehat{E}}
\newcommand{\al}{\alpha}
\newcommand{\Ga}{\Gamma}
\newcommand{\De}{\Delta}
\newcommand{\la}{\lambda}\newcommand{\La}{\Lambda}
\newcommand{\Si}{\Sigma}
\newcommand{\Up}{\Upsilon}
\newcommand{\om}{\omega}
\newcommand{\rank}{\textup{rank\,}}
\newcommand{\ran}{\textup{ran\,}}
\newcommand{\im}{\textup{Im}}
\newcommand{\re}{\textup{Re}}
\newcommand{\kr}{\textup{Ker\,}}
\newcommand{\diag}{\textup{diag\,}}
\newcommand{\mat}[2]{\ensuremath{\left[\begin{array}{#1}#2\end{array} \right]}}
\newcommand{\sbm}[1]{\left[\begin{smallmatrix} #1\end{smallmatrix}\right]}
\newcommand{\tu}[1]{\textup{#1}}
\newcommand{\wtil}[1]{{\widetilde{#1}}}
\newcommand{\what}[1]{{\widehat{#1}}}
\newcommand{\half}{\frac{1}{2}}
\newcommand{\ands}{\quad\mbox{and}\quad}
\newcommand{\PRO}{\mathcal{PRO}}
\begin{document}


\title[Positive real odd rational matrix functions]{The convex invertible cone structure of positive real odd rational matrix functions}

\author[S. ter Horst]{S. ter Horst}
\address{S. ter Horst, Department of Mathematics, Research Focus Area:\ Pure and Applied Analytics, North-West
University, Potchefstroom, 2531 South Africa and DSI-NRF Centre of Excellence in Mathematical and Statistical Sciences (CoE-MaSS)}
\email{Sanne.TerHorst@nwu.ac.za}

\author[A. Naude]{A. Naud\'{e}}
\address{A. Naude, Department of Mathematics, Research Focus Area:\ Pure and Applied Analytics, North-West
University, Potchefstroom, 2531 South Africa and DSI-NRF Centre of Excellence in Mathematical and Statistical Sciences (CoE-MaSS)}
\email{naudealma@gmail.com}

\thanks{This work is based on the research supported in part by the National Research Foundation of South Africa (Grant Number 90670 and 118513).
}

\subjclass[2010]{Primary 34A09; Secondary 93B50, 93B55, 93C05, 65L80}


\keywords{Positive real odd matrix functions, lossless systems, descriptor systems, system inversion, transfer function zeros and poles}

\begin{abstract}
Positive real odd matrix functions, often referred to as positive real lossless matrix functions, play an important role in many applications in multi-port electrical systems. In this paper we present closer analogues to some of the known results for the scalar, one-port, case in the multi-port setting. Specifically, we determine necessary and sufficient conditions for the well studied partial fraction formula to represent functions in the class of positive real odd matrix functions, and explicit minimal state space realization formulas for the inverse (admittance) of a function in this class, which itself is also a positive real odd matrix function. Doing so, enables us to provide a partial analogue of the pole-zero interlacing behavior from the scalar case.
\end{abstract}

\maketitle

\section{Introduction}

The research on positive real odd functions ($\PRO$ for short), often also called positive real lossless functions, got spearheaded by the pioneering work in electrical engineering of Foster \cite{F24}, Cauer \cite{C26} and Brunce \cite{B31}, once it was observed by Foster that this class of functions appears as the impedances (and their admittances) of lumped one-port electrical circuits generated by inductances and capacitors; see also \cite{N66,B68,AV73}. One of the main results of Foster \cite{F24} is the seminal canonical form for one-port reactance functions, namely $f$ is in $\PRO$ if and only if it has the form
\begin{equation}\label{FosterScalar}
f(z)= a_0 z + \sum_{k=1}^s \frac{a_k z}{z^2+\om_k^2},\quad a_0\geq 0,\  a_k,\om_k\geq 0,\ k=1,\ldots,s.
\end{equation}
In words, all poles are on $i\BR\cup \{\infty\}$, simple, come in complex conjugate pairs (apart from $0$ and $\infty$) and have positive residues. This implies that the zeros of $f$ interlace the poles on $i\BR$, and, as a consequence, the involution (or admittances) $1/f$ is also in $\PRO$. In particular, $\PRO$ is a convex invertible cone \cite{CL07}, i.e., a convex cone which is closed under inversion. Convex cones play an important role in many parts of applied mathematics; the concept of convex invertible cones in system and control theory was propagated by Cohen and Lewkowicz \cite{CL09,CL07,CL97a,CL97b}.

Matrix-valued $\PRO$ functions appear when multi-port electrical systems built from inductances, capacitors and gyrators are considered, and they have been studied intensely for many decades, leading to a vast literature on this topic, cf., \cite{N66,AV73} for references and a discussion of the classical work and \cite{BR14,CT08,R10} for some more recent results. In this setting, for an integer $m\geq 0$ we write $\PRO_m$ for the class of $m\times m$ rational matrix functions $F$ so that
\begin{equation}\label{PROm}
\begin{aligned}
&\re(F(z)) \geq 0, \mbox{ for $\re(z)>0$},\quad F(t)\in\BR^{m\times m}  \mbox{ for $t\in\BR$},\\
&\qquad-F(z)=F(-\overline{z})^* \mbox{ for $z$ not a pole of $F$}.
\end{aligned}
\end{equation}
Here for any square matrix $K$, with $K\geq 0$ ($K\leq 0$) we indicate that $K$ is positive (negative) semidefinite, while for matrices $K,L$, $K\geq L$ should be interpreted as $K-L\geq 0$. The last condition in \eqref{PROm}, stating the $F$ is odd on $\BR$, is equivalent to $\re (F(z))=0$ for $z\in i\BR$, not a pole of $F$, which gives the connection with lossless systems. It is easy to prove from the defining conditions \eqref{PROm} that $\PRO_m$ is also a convex invertible cone, with invertibility in the form of involution, i.e., $F^{-1}(z):=F(z)^{-1}$ in case $\det F(z)\not\equiv 0$. The matrix form of the partial fraction expansion \eqref{FosterScalar} has also been studied extensively, cf., \cite{N66,AV73}, and takes the form
\begin{equation} \label{FosterMat1}
F(z) = z Q + R + \sum_{j=1}^s \frac{1}{z^2 + \om_j^2} \left(zQ_j + R_j \right),
\end{equation}
where $\om_j\geq0$, $Q,R,Q_j,R_j\in \BR^{m\times m}$ with $Q,Q_j\geq 0$ and $R,R_j$ skew-symmetric. However, not all functions $F$ of this form are in $\PRO_m$, and we have not been able to find in the literature precise conditions on the parameters in \eqref{FosterMat1} which guarantee that $F$ is in $\PRO_m$. In Theorem \ref{T:Main2} below we identify the remaining condition to be
\begin{equation}\label{FosterCon1}
-\om_j Q_j \leq i R_j \leq \om_j Q_j,\quad j=1,\ldots,s.
\end{equation}
Although various successful approaches to the positive real lossless synthesis problems have been obtained, see Remark \ref{R:Synth} below, these do not seem to rely on a condition of the form \eqref{FosterMat1}, but rather seem to use the fact that $F$ is in $\PRO_m$ directly. Using minimal state space realization formulas for the class $\PRO_m$ collected in Section \ref{S:Transfer}, we show that adding \eqref{FosterCon1} provides necessary and sufficient conditions for \eqref{FosterMat1} to be a characterization of $\PRO_m$. In fact, we provide a concrete construction of a minimal realization, satisfying the appropriate conditions, for a function $F$ of the form \eqref{FosterMat1} satisfying \eqref{FosterCon1}.

Note that a $m\times m$ rational matrix function $F$ is in $\PRO_m$ if and only if $N(z):= iF(-iz)$ is a Nevanlinna function. Annemarie L\"{u}ger \cite{L20} confirmed to us that the conditions \eqref{FosterCon1} can also be derived from the integral representations that exist for the class of matrix-valued Nevanlinna functions. However, condition \eqref{FosterCon1} does not seem to have appeared in the literature before, and the explicit construction of the state space realization based on this condition also seems to be new.

The main part of the paper, however, involves the convex invertible cone structure of $\PRO_m$, more specifically, the fact that $\PRO_m$ is closed under inversion. This is not difficult to prove from \eqref{PROm}, however, in Section \ref{S:Inverse} we present explicit formulas for minimal state space realizations of $F^{-1}$, of the types presented in Section \ref{S:Transfer}, based on given state space realizations for $F$; see Theorems \ref{T:inv1} and \ref{T:inv2} below. One of the advantages of this direct approach is that it enables us to analyse the pole-zero structure of functions in $\PRO_m$ by comparing eigenvalues of the state matrices of the realizations of $F$ and $F^{-1}$. This leads to a partial analogue of the pole-zero interlacing property in the scalar case, which is presented in the final section, see Theorem \ref{T:Interlace}. In particular, we obtain that between two subsequent poles of $F\in\PRO_m$ on $i\BR$ zeros can occur with multiplicities that add up to at most $m$, and likewise between two zeros. Different from the scalar case, however, independent of zeros (poles) occurring between two subsequent poles (zeros) it can also happen that a zero (pole) occurs at one or both of the two poles (zeros), as poles and zeros can occur at the same point.

Finally, we point out that some of the results that we derive here have been presented in the proceedings paper \cite{tHN20}, without proof, except for an alternative, less constructive proof of the sufficiency part of Theorem \ref{T:Main2}.

\section{Preliminaries about transfer function representations}\label{S:TransferPre}

For the readers convenience we recall here some basic result about transfer function representations for real rational matrix functions, that will be used throughout the paper.  Here a real rational matrix function is a matrix function whose entries are ratios of two real polynomials, although we will consider them as functions acting on $\BC$. For proofs, further results and background on this topic we refer to \cite{ZDG96,DP00,D89,KM06}.

Recall that an $m\times k$ (real) rational matrix function $F$ is called proper in case $\lim_{z\to\infty} F(z)$ exists. In case $F$ is proper, there exist matrices $A\in \BR^{n \times n}$, $B\in\BR^{n\times k}$, $C\in\BR^{m\times n}$ and $D\in\BR^{m\times k}$, for some positive integer $n$, so that
\begin{equation}\label{realize}
F(z)=D+C(zI-A)^{-1}B,\quad \mbox{for $z\in\BC$ not a pole of $F$.}
\end{equation}
Such a representation of $F$ is called a {\em transfer function representation}, since the right hand side of \eqref{realize} coincides with the transfer function of the linear state space system
\begin{equation}\label{linsys}
\Si:\ \ \left\{
\begin{array}{l}
  \dot{x}(t) = A x(t)+ B u(t),\  \ x(0)=0, \\
  y(t) = C x(t) + D u(t),\ \  t\geq 0.
\end{array}
 \right.
\end{equation}
In this context $n$ is called the state space dimension. Note that $D=\lim_{z\to\infty} F(z)$.

The function $F$ has many different transfer function representations \eqref{realize}. However, if we demand that the state space dimension $n$ is as small as possible, then the representation \eqref{realize} is unique up to transformations of the state space. In such a case we say that the transfer function representation \eqref{realize} is {\em minimal}. To test for minimality, define the controllability gramian $\cG_\tu{c}$ and observability gramian $\cG_\tu{o}$ as
\[
\cG_\tu{c}:=\sum_{j=0}^{n-1} A^j BB^* A^{*j} \ands
\cG_\tu{o}:=\sum_{j=0}^{n-1} A^{*j} C^*C A^{j}.
\]
Then the system $\Si$ in \eqref{linsys}, or the pair $(A,B)$, is called {\em controllable} if $\cG_\tu{c}$ is positive definite, while $\Si$, or the pair $(C,A)$, is called {\em observable} if $\cG_\tu{o}$ is positive definite. Note that the pair $(A,B)$ is controllable if and only if $(B^*,A^*)$ is an observable pair. It turns out that \eqref{realize} is a minimal transfer function realization precisely when $\Si$ is controllable and observable.

Whenever $F$ is not proper, it is possible to write $F(z)=F_\tu{p}(z)+ P(z)$ with $F_\tu{p}$ a proper rational matrix function and $P$ a matrix polynomial. Then $F_\tu{p}$ does admit a transfer function representation of the form \eqref{realize}. However, it is possible to write even a non-proper $F$ as the transfer function of a linear state space system, if one also allows descriptor systems, also referred to as singular systems or differential-algebraic systems, cf., \cite{D89,KM06}. A {\em descriptor system} is a linear state space system of the form
\begin{equation}\label{lindessys}
\Si_\tu{descr}:\ \ \left\{
\begin{array}{l}
  E\dot{x}(t) = A x(t)+ B u(t),\  \ x(0)=0, \\
  y(t) = C x(t) + D u(t),\ \  t\geq 0,
\end{array}
 \right.
\end{equation}
with $A$, $B$, $C$ and $D$ as before and $E\in \BR^{n \times n}$. In fact, the matrices $A$ and $E$ need not be square, but in this paper we will only encounter the square case. The descriptor system $\Si_\tu{descr}$, or the pair $(E,A)$, is called {\em regular} in case $\det (zE-A)\not\equiv 0$. The transfer function of a regular descriptor system $\Si_\tu{descr}$ is given by
\begin{equation}\label{realdes}
F(z)=D+C(zE-A)^{-1}B,\quad \mbox{for $z$ with $\det (zE-A)\neq 0$,}
\end{equation}
and any real rational matrix function $F$ appears as the transfer function of a regular descriptor system. Also here, a transfer function representation \eqref{realdes} is called {\em minimal} whenever the state space dimension $n$ is as small as possible. It is less straightforward to test minimality of a descriptor transfer function representation. In \cite[Theorem 6.2]{FJ04} a necessary and sufficient condition based on Hautus tests criteria is presented. Concretely, the representation \eqref{realdes} is minimal if and only if the folowing five conditions are met
\begin{align*}
\mbox{(i) } \rank \mat{cc}{z E - A & B}=n\mbox{ for all }z\in\BC,\quad
& \mbox{ (ii) } \rank \mat{cc}{E & B}=n,\\
\mbox{(iii) } \rank \mat{cc}{z E^T - A^T & C^T}=n \mbox{ for all }z\in\BC, \quad
& \mbox{ (iv) } \rank \mat{cc}{E^T & C^T}=n,\\
\mbox{(v) } A (\kr E)\subset \im\, E.
\end{align*}
Finally, after a transformation of the state space, it is always possible to write a regular descriptor system in its so called {\em Weierstrass form}. In this form the state space decomposes as an orthogonal direct sum $\BR^{n}=\BR^{n_1}\oplus \BR^{n_2}$ in such a way that with respect to this decomposition the matrices $E$, $A$, $B$ and $C$ take the form
\[
E=\mat{cc}{I_{n_1}&0\\ 0 & N},\quad A=\mat{cc}{A_1 & 0\\ 0& I_{n_2}},\quad B=\mat{c}{B_1\\B_2},\quad C=\mat{cc}{C_1 & C_2},
\]
where $N$ is a nilpotent matrix.
%
%
%
%
%

\section{Transfer function representations of $\PRO_m$ functions}\label{S:Transfer}

The main result in this section (Theorem \ref{T:Transfer}) appears to be well known, and is included mainly because it is required for our further analysis of $\PRO_m$ functions in the remainder of this paper. We could not find the precise statement in the literature, hence, for completeness, we indicated below how it can be obtained from some known results in e.g.\ \cite{R10}.

\begin{theorem}\label{T:Transfer}
An $m\times m$ rational matrix function $F$ is in $\PRO_m$ if and only if it admits a realization of the form
\begin{equation}\label{RealFormPRO}
F(z)=z M +D + B^T (z I_n -A)^{-1}B,
\end{equation}
for some integer $n\geq 0$, $M,D\in\BR^{m \times m}$, $B\in\BR^{n \times m}$ and $A\in\BR^{n \times n}$ with
\begin{equation}\label{RealFormPROcon}
M\geq 0,\quad A^T=-A,\quad D^T=-D,\quad \mbox{$(A,B)$ a controllable pair.}
\end{equation}
\end{theorem}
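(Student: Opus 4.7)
The plan is to prove sufficiency by direct substitution and necessity by a constructive argument based on partial fractions.

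\emph{Sufficiency.} Given $F$ of the form \eqref{RealFormPRO} satisfying \eqref{RealFormPROcon}, I would check the three defining properties of $\PRO_m$ in turn. Realness on $\BR$ is immediate from the realness of the matrices. Oddness follows from a one-line manipulation of $F(-\ov z)^*$ using $M=M^T$, $D^T=-D$, $A^T=-A$. For positive realness the key step is the resolvent identity
\[
(zI-A)^{-1} + (\ov z I + A)^{-1} = 2\re(z)\,(zI-A)^{-1}\bigl((zI-A)^{-1}\bigr)^*,
\]
obtained by sandwiching $(\ov z I + A) + (zI - A) = 2\re(z) I$ between the two resolvents and using $(\ov z I + A) = (zI - A)^*$ (since $A^* = -A$). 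Combined with $D + D^* = 0$, this gives $F(z) + F(z)^* = 2\re(z)\bigl(M + B^T W W^*B\bigr)$ with $W = (zI - A)^{-1}$, which is positive semidefinite whenever $\re z > 0$.

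\emph{Necessity.} Here I would appeal to the classical analytic structure of $\PRO_m$ functions (cf.\ \cite{N66,AV73,R10}): all finite poles of $F$ lie on $i\BR$ and are simple, the pole at $\infty$ has order at most $1$, and the residues at the imaginary poles are real, symmetric and positive semidefinite. Pairing conjugate nonzero imaginary poles yields the expansion
\[
F(z) = zM + D + \frac{R_0}{z} + \sum_{j=1}^{s}\frac{2 z R_j}{z^2 + \om_j^2},
\]
with $M, R_0, R_j\geq 0$ real symmetric (the $R_0/z$ term present only if $0$ is a pole) and $D$ real skew-symmetric (the latter forced by oddness on the constant part). For each $j\geq 1$ I factor $R_j = B_j^T B_j$ with $B_j\in\BR^{r_j\times m}$ of full row rank $r_j=\rank R_j$ and form the $2r_j$-dimensional block
\[
A_j = \mat{cc}{0 & \om_j I_{r_j}\\ -\om_j I_{r_j} & 0},\qquad B_j' = \sqrt 2\,\mat{c}{B_j\\ 0},
\]
which by a short direct calculation yields $(B_j')^T(zI-A_j)^{-1}B_j' = \tfrac{2 z R_j}{z^2+\om_j^2}$; for the pole at $0$ (if present) I take $A_0 = 0$ and $B_0$ from a rank factorization $R_0 = B_0^T B_0$. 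Concatenating these blocks into a block-diagonal $A$ and stacked $B$ gives a realization of $F$ of the form \eqref{RealFormPRO}, with $A^T=-A$ and $M\geq 0$ by construction. Controllability of $(A,B)$ follows blockwise (each $(A_j,B_j')$ is controllable by the full row rank of $B_j$) together with the disjointness of the block spectra $\{\pm i\om_j\}$ via the Hautus test.

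The main obstacle is not the algebraic construction but the justification of the partial-fraction template, namely simplicity of the imaginary poles and positive semidefiniteness of their residues for $F\in\PRO_m$. That is precisely where the analytic content of the $\PRO_m$ hypothesis is spent, and where I would cite the standard theory rather than reprove it from scratch.
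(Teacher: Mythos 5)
Your sufficiency argument is correct and self-contained: the resolvent identity you state follows from sandwiching $(\ov z I+A)+(zI-A)=2\re(z)I$ between the resolvents and using $(zI-A)^*=\ov zI+A$, and it yields $F(z)+F(z)^*=2\re(z)(M+B^TWW^*B)\geq 0$ for $\re z>0$ as claimed. This direction is unproblematic (the paper simply cites the positive real lemma of \cite{R10} for both directions rather than verifying anything directly).

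The necessity direction, however, has a genuine gap in the partial-fraction template. For $m>1$ the residue of $F\in\PRO_m$ at a nonzero imaginary pole $i\om_j$ is Hermitian positive semidefinite but \emph{not} real in general: it has the form $\tfrac{1}{2}Q_j-\tfrac{i}{2\om_j}R_j$ with $Q_j\geq 0$ symmetric and $R_j$ skew-symmetric, and pairing conjugate poles produces a term $\frac{zQ_j+R_j}{z^2+\om_j^2}$ with a nonzero constant numerator $R_j$. Your template $\frac{2zR_j}{z^2+\om_j^2}$ with $R_j$ real symmetric silently assumes the residues are real, and your block $B_j'=\sqrt2\sbm{B_j\\0}$ can only ever produce $\frac{2zB_j^TB_j}{z^2+\om_j^2}$, i.e.\ no skew-symmetric part. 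A concrete function you miss is $F(z)=\frac{1}{z^2+1}\left(zI_2+\sbm{0&1\\-1&0}\right)\in\PRO_2$, whose residue at $i$ is $\tfrac12 I_2-\tfrac{i}{2}\sbm{0&1\\-1&0}$. Repairing this is precisely the nontrivial content of Section \ref{S:Foster}: one must identify the constraint \eqref{FosterCon2}, $-\om_jQ_j\leq iR_j\leq \om_jQ_j$, satisfied by the pair $(Q_j,R_j)$, and then invoke Lemma \ref{L:Lift} to factor $Q_j=B_j^TB_j$ \emph{and} $R_j=B_j^TA_jB_j$ simultaneously with $(A_j,B_j)$ controllable — your construction is the special case $R_j=0$. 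Note also that the paper's own proof of Theorem \ref{T:Transfer} deliberately avoids this route, going instead through the decomposition $F=zM+F_0$ (\cite[Proposition 7]{R10}) and the lossless positive real lemma (\cite[Theorem 8]{R10}) with a state-space similarity normalizing the Lur'e solution to the identity; the constructive partial-fraction argument is deferred to Theorem \ref{T:Main2}, exactly because of the residue subtlety your proposal overlooks.
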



\begin{proof}[\bf Proof] The transfer function characterization of $\PRO_m$ via \eqref{RealFormPRO} with conditions \eqref{RealFormPROcon} follows from \cite{AV73}, see also \cite{R10}. Indeed, by Proposition 7 in \cite{R10} $F$ is positive real (first two conditions in \eqref{PROm}) if and only if its strictly polynomial part is of the form $zM$ with $M\geq 0$ and its proper part is also positive real. It is then clear that $F$ in $\PRO_m$ is equivalent to $F(z)=zM +F_0(z)$ with $F_0$ in $\PRO_m$ and proper. For the proper part $F_0$ one can apply the Positive Real Lemma for proper, positive real, lossless functions \cite[Theorem 8]{R10}, applying a state space similarity in case the solution $X$ to the Lur'e equations ((5) in \cite{R10} with $K=0$ and $J=0$) is not equal to the identity matrix. Recall here that lossless is a different terminology for the odd-property in $\PRO_m$.
\end{proof}


From the previous theorem, we easily get a descriptor characterization in Weierstrass form.

\begin{theorem}\label{T:Weierstrass}
A $m \times m$ rational matrix function $F$ is in $\PRO_m$ if and only if it admits a minimal descriptor realization of the form
\begin{equation}\label{Weierstrass1}
F(z)=D^\circ+C^{\circ T}(z E^\circ-A^\circ)^{-1}B^\circ,
\end{equation}
where we set $q=\rank M$ and factor $M=K^TK$ with $K\in\BR^{q \times m}$, and
\begin{equation}\label{Weierstrass2}
\begin{aligned}
&A^\circ=\mat{ccc}{A&0&0\\ 0&I_q&0\\0&0&I_q} ,\quad  E^\circ=\mat{ccc}{I_n&0&0\\0&0&I_q\\0&0&0},\\
&  B^\circ\!=\!\mat{c}{B\\ 0\\ -K},\quad  C^\circ=\mat{c}{B\\ K\\ 0},\quad D^\circ=D,
\end{aligned}
\end{equation}
with $M,D\in\BR^{m \times m}$, $B\in\BR^{n \times m}$ and $A\in\BR^{n \times n}$ matrices satisfying \eqref{RealFormPROcon}.
\end{theorem}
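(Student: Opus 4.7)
My plan is to leverage Theorem \ref{T:Transfer} as a bridge: $F\in\PRO_m$ if and only if $F$ admits a representation $F(z)=zM+D+B^T(zI_n-A)^{-1}B$ for some $(M,D,A,B)$ satisfying \eqref{RealFormPROcon}, so it suffices to prove that having such a representation is equivalent to admitting a minimal descriptor realization of the form \eqref{Weierstrass1}--\eqref{Weierstrass2}. The converse implication is immediate: given such a descriptor realization with $(M,D,A,B)$ obeying \eqref{RealFormPROcon}, a direct evaluation recovers the polynomial-plus-proper form of Theorem \ref{T:Transfer}, whence $F\in\PRO_m$. The work therefore lies in the forward direction, where I would construct the realization explicitly and then verify regularity and minimality.

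For the construction I would factor the positive semidefinite matrix $M$ as $M=K^TK$ with $K\in\BR^{q\times m}$ of full row rank $q=\rank M$, and assemble $(E^\circ,A^\circ,B^\circ,C^\circ,D^\circ)$ as in \eqref{Weierstrass2}. Since $\det(zE^\circ-A^\circ)=\det(zI_n-A)\not\equiv 0$, the pencil is regular, and a straightforward block inversion gives
\[
(zE^\circ-A^\circ)^{-1}=\mat{ccc}{(zI_n-A)^{-1} & 0 & 0 \\ 0 & -I_q & -zI_q \\ 0 & 0 & -I_q}.
\]
Sandwiching this between $C^{\circ T}$ and $B^\circ$ yields $B^T(zI_n-A)^{-1}B+zK^TK=B^T(zI_n-A)^{-1}B+zM$; adding $D^\circ=D$ recovers $F$.

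The main technical task is then to verify minimality through the five Hautus-type conditions recalled in Section \ref{S:TransferPre}. Conditions (ii) and (iv) reduce to rank counts in which $I_n$, $I_q$, and $K$ (which has rank $q$ because $\rank K=\rank K^TK$) contribute disjointly and account for all $n+2q$ pivots. For condition (i), the two $-I_q$ blocks make the lower $2q$ rows of $[zE^\circ-A^\circ,\,B^\circ]$ full rank for every $z$, leaving the test $\rank[zI_n-A,\,B]=n$, which holds by controllability of $(A,B)$; condition (iii) is symmetric, using that $A^T=-A$ makes $(B^T,A)$ observable. Condition (v) follows by inspection, since $\kr E^\circ$ is the middle coordinate block, which $A^\circ$ fixes pointwise and which sits inside $\ran E^\circ=\{(v_1,v_2,0)\}$. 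The main obstacle I anticipate is arranging the nilpotent block so that it simultaneously encodes the polynomial term $zM$ in the transfer function and passes all five minimality criteria; the $3\times 3$ block decomposition in \eqref{Weierstrass2} is designed precisely for this, but tracking the interplay of the two $I_q$ blocks through every condition requires some care.
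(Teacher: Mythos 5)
Your proposal is correct and follows exactly the route the paper intends: the paper's "proof" of Theorem \ref{T:Weierstrass} is just the remark that it follows from Theorem \ref{T:Transfer} by direct computation, and your block inversion of the pencil, the recovery of $zM+D+B^T(zI_n-A)^{-1}B$, and the verification of the five Hautus-type minimality conditions of \cite{FJ04} are precisely that computation, carried out correctly (including the use of $A^T=-A$ to get observability of $(B^T,A)$ from controllability of $(A,B)$).
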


The proof follows by direct computation and is left to the interested reader. Again, we include this result as it plays an important role in the sequel.

Since the state matrix $A$ in \eqref{RealFormPRO} is skew-symmetric, it is clear that no Jordan blocks of size larger than one can appear, which is also expressed in the form of the Foster representation. Hence, it makes more sense to define the multiplicity of a pole $\om\neq \infty$ of a function $F\in\PRO_m$ to be the dimension of the eigenspace of $\om$ as an eigenvalue of the state matrix $A$ in the minimal realization of Theorem \ref{T:Transfer}, while the multiplicity of $\infty$ as a pole of $F$ is defined as $\rank M$. In this way, the multiplicities of the finite poles add up to the McMillan degree of the proper part of $F$, i.e., to the minimal state space dimension. The zeros of $F$ are then defined to be the poles of $F^{-1}$ in case $\det F(z)\not\equiv 0$, and their multiplicities are the multiplicities of the corresponding poles of $F^{-1}$.

Next we show that the multiplicities of the poles cannot exceed $m$.

\begin{corollary}\label{C:EigenBound}
For $F\in\PRO_m$ every pole on $i\BR$, $\infty$ included, has a multiplicity of at most $m$.
\end{corollary}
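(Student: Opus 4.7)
The plan is to treat the pole at infinity and the finite poles on $i\BR$ separately. For $\omega=\infty$ the multiplicity is by definition $\rank M$, and since $M\in\BR^{m\times m}$ this is trivially at most $m$. So everything reduces to the case of a finite pole.

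For a finite pole $\omega\in i\BR$, I would fix a realization $F(z)=zM+D+B^T(zI_n-A)^{-1}B$ of the form provided by Theorem \ref{T:Transfer}, with $A^T=-A$, $D^T=-D$, $M\geq 0$ and $(A,B)$ controllable. Since $A$ is real skew-symmetric, hence skew-Hermitian over $\BC$, it is normal, has purely imaginary eigenvalues, and carries no nontrivial Jordan structure; in particular its algebraic and geometric eigenvalue multiplicities coincide. Together with controllability of $(A,B)$ (which forces observability of $(B^T,A)$ via the substitution $A\mapsto -A$), this makes the chosen realization minimal, so the multiplicity of $\omega$ as a pole of $F$ coincides with $\dim_{\BC}V_\omega$, where $V_\omega:=\kr(\omega I_n-A)\subseteq\BC^n$.

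The core step is to show that the $\BC$-linear map $\phi:V_\omega\to\BC^m$, $v\mapsto B^T v$, is injective, which forces $\dim_{\BC}V_\omega\leq m$. This is a direct application of the Hautus/PBH controllability criterion: for any nonzero $v\in V_\omega$, taking conjugate transpose in $Av=\omega v$ and using $A^*=-A$ together with $\bar\omega=-\omega$ gives $v^*A=\omega v^*$, so $v$ is also a left eigenvector of $A$ at $\omega$; by PBH at $\omega$, controllability of $(A,B)$ then excludes $v^*B=0$. Since $B$ is real, $v^*B=0\Leftrightarrow B^T v=0$, so $\kr \phi=\{0\}$ and we are done. The only delicate bookkeeping is the passage from right to left eigenvectors, which is immediate for the skew-symmetric $A$; there is no substantive obstacle beyond this.
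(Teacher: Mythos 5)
Your proof is correct, and it takes a genuinely different route from the paper's. The paper argues computationally: it block-diagonalizes $A$ by an orthogonal $U$ into $2k_j\times 2k_j$ blocks $A_j=\om_j\sbm{0&I_{k_j}\\-I_{k_j}&0}$ plus a zero block, partitions $U^TB$ accordingly, and then uses $A_j^2=-\om_j^2 I$ to collapse the controllability matrix of each block to $[\,B_j\ \ A_jB_j\,]\in\BR^{2k_j\times 2m}$, whence $k_j\le m$; the zero eigenvalue is handled by a separate rank count giving $n-2k\le m$. You instead apply the Hautus/PBH test directly on each eigenspace: the key observation that for real skew-symmetric $A$ a right eigenvector for $\om\in i\BR$ is simultaneously a left eigenvector for the same $\om$ (since $v^*A=\bar\om\, v^*A^*{}^*$ reduces to $v^*A=\om v^*$ when $\bar\om=-\om$) lets controllability force $B^T|_{\kr(\om I-A)}$ to be injective, so $\dim_\BC\kr(\om I-A)\le m$. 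Your argument is shorter, treats the zero and nonzero eigenvalues uniformly, and isolates the structural reason the bound holds; the paper's explicit decomposition is heavier but reuses machinery ($U^TAU$ in block form, the partition of $U^TB$) that is needed anyway for the Foster representation in Section 4. The auxiliary points you flag — minimality of the realization via observability of $(B^T,A)$, the equivalence $v^*B=0\Leftrightarrow B^Tv=0$ for real $B$, and the identification of pole multiplicity with the eigenspace dimension of $A$ (legitimate since skew-symmetry rules out nontrivial Jordan blocks) — are all handled correctly.
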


\begin{proof}[\bf Proof]
Following Theorem \ref{T:Transfer}, let $\omega_1, ... ,\omega_s$ be the non-zero eigenvalues of $A$ on $i\mathbb{R}_+$ with multiplicities $k_1, ... ,k_s.$ There exists an orthogonal matrix $U$ so that
\begin{equation*}
    U^TAU=\diag\left(A_1, \hdots ,A_s,0\right), \quad \text{with } A_j = \omega_j\begin{bmatrix} 0 & I_{k_j} \\ -I_{k_j} & 0 \end{bmatrix} \text{ and } \sum_{j=1}^sk_j=k,
\end{equation*}
with the $0$ in the last block diagonal entry indicating a block zero matrix of size $(n-2k)\times (n-2k)$. Now decompose $U^TB$ accordingly as\begin{equation*}
    U^TB = \begin{bmatrix}B_1 \\ \vdots \\ B_{s} \\ \widetilde{B} \end{bmatrix}, \quad \text{with} \quad B_j \in \mathbb{R}^{2k_j \times m} \text{ for } 1 \le j \le s \text{ and } \widetilde{B} \in \mathbb{R}^{n-2k \times m}.
\end{equation*} Since $(A,B)$ is a controllable pair, it follows that \begin{equation*} \begin{aligned}
    n&=\rank U^T\begin{bmatrix} B & AB & \hdots & A^{n-1}B \end{bmatrix}\\
    &= \rank \begin{bmatrix}U^TB & U^TAUU^TB & \hdots & \left(U^TAU\right)^{n-1}U^TB \end{bmatrix} \\
    &=\rank \begin{bmatrix} B_1 & A_1B_1 & \hdots & A_1^{n-1}B_1 \\ \vdots & \vdots & \ddots &\vdots \\ B_s & A_sB_s & \hdots & A_s^{n-1}B_s \\ \widetilde{B} & 0 & \hdots & 0 \end{bmatrix}, \end{aligned}
\end{equation*}
which is true only if $\rank \widetilde{B} = n-2k$. Thus $n-2k \le m.$ This proves that the multiplicity of $0$ as a pole of $F$ is at most $m.$
Again from the controllability of the pair $(A,B)$, it also follows for any $1 \le j \le s$ that
\begin{equation*}
2k_j = \rank \begin{bmatrix}B_j & A_jB_j & \hdots & A_j^{n-1}B_j \end{bmatrix}.
\end{equation*}
Since $A_j^2 =-\omega_j^2I_{2k_j}$, it follows for $n=2(r+1)$ that
\begin{equation*}
2k_j=\rank \begin{bmatrix} B_j & A_jB_j & -\omega_j^2B_j&\hdots & \left(-\omega_j^2 \right)^rA_jB_j \end{bmatrix}
= \rank \begin{bmatrix}B_j & A_jB_j \end{bmatrix}
\end{equation*}
and for $n=2r+1$ that
\begin{equation*}
2k_j=\rank \begin{bmatrix} B_j & A_jB_j & -\omega_j^2B_j&\hdots & \left(-\omega_j^2 \right)^rB_j \end{bmatrix}= \rank \begin{bmatrix}B_j & A_jB_j \end{bmatrix}.
\end{equation*}
Thus $2k_j=\rank\begin{bmatrix} B_j & A_jB_j \end{bmatrix}$ and from $\begin{bmatrix} B_j & A_jB_j \end{bmatrix} \in \mathbb{R}^{2k_j \times 2m}$ it follows that $k_j \le m$ for every $1 \le j \le s.$
Lastly, the multiplicity of $\infty$ as a pole of $F$ is given by $\rank M\leq m$.
\end{proof}

Since zeros are poles of $F^{-1}$, the next corollary follows immediately.

\begin{corollary}\label{C:EigenBound2}
For $F\in\PRO_m$ every zero on $i\BR$, $\infty$ included, has a multiplicity of at most $m$.
\end{corollary}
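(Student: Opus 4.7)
The plan is essentially to reduce the statement to Corollary \ref{C:EigenBound} via involution. By the definition adopted just before Corollary \ref{C:EigenBound}, the zeros of $F\in\PRO_m$ (and their multiplicities) are exactly the poles of $F^{-1}$ (and their multiplicities), provided $\det F(z)\not\equiv 0$; this nondegeneracy assumption is implicit in speaking about zeros at all, so we may assume it throughout.

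First I would invoke the fact, noted in the introduction and used repeatedly, that $\PRO_m$ is a convex invertible cone: if $F\in\PRO_m$ with $\det F\not\equiv 0$, then $F^{-1}\in\PRO_m$. This is immediate from the defining properties \eqref{PROm} (the Cayley/involution argument on the right half-plane together with the odd symmetry on $i\BR$ are both preserved under pointwise matrix inversion). Once $F^{-1}\in\PRO_m$ is in hand, Corollary \ref{C:EigenBound} applied to $F^{-1}$ states that every pole of $F^{-1}$ on $i\BR\cup\{\infty\}$ has multiplicity at most $m$. Translating back via the definition of zeros yields the claim.

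The only conceptual subtlety is to check that the pole at $\infty$ is handled consistently: the multiplicity of $\infty$ as a pole of $F^{-1}$ is the rank of the leading matrix coefficient in the polynomial part of $F^{-1}$, i.e.\ $\rank M'$ in a realization \eqref{RealFormPRO} for $F^{-1}$, and by Corollary \ref{C:EigenBound} this is bounded by $m$. So the multiplicity of $\infty$ as a zero of $F$ is likewise at most $m$. For finite points $i\omega \in i\BR$, the multiplicity of $i\omega$ as a pole of $F^{-1}$ is the dimension of the eigenspace of the state matrix of a minimal realization of $F^{-1}$ at $i\omega$, again bounded by $m$ through Corollary \ref{C:EigenBound}.

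I do not expect any genuine obstacle here. The one thing worth being careful about is that the involution-closure of $\PRO_m$ is used as a known fact; the explicit minimal realization formulas for $F^{-1}$ promised in Section \ref{S:Inverse} are not needed for this qualitative bound, only the set-theoretic statement $F^{-1}\in\PRO_m$, which follows directly from \eqref{PROm}.
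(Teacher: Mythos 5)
Your proposal is correct and matches the paper's argument exactly: the paper also deduces this corollary immediately from Corollary \ref{C:EigenBound} by using that zeros of $F$ are by definition poles of $F^{-1}$ and that $F^{-1}\in\PRO_m$. Your additional remarks on the pole at $\infty$ and on not needing the explicit inversion formulas are accurate but not needed beyond what the paper states.
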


\section{The Foster representation}\label{S:Foster}

In this section we prove the Foster representation formula for functions in $\PRO_m$, that is, we prove the following theorem.

\begin{theorem}\label{T:Main2}
An $m\times m$ rational matrix function $F$ is in $\PRO_m$ if and only if $F$ is of the form
\begin{equation} \label{FosterMat2}
F(z) = z Q + R + \sum_{j=1}^s \frac{1}{z^2 + \om_j^2} \left(zQ_j + R_j \right),
\end{equation}
where $\om_j\geq 0$, $Q,R,Q_j,R_j\in \BR^{m\times m}$ with $Q,Q_j\geq 0$ and $R,R_j$ skew-symmetric so that
\begin{equation}\label{FosterCon2}
-\om_j Q_j \leq i R_j \leq \om_j Q_j,\quad j=1,\ldots,s.
\end{equation}
\end{theorem}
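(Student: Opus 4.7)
The plan is to pass back and forth between the Foster form \eqref{FosterMat2} and a realization of the shape \eqref{RealFormPRO}--\eqref{RealFormPROcon}, using Theorem~\ref{T:Transfer} in both directions.

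For necessity, I would start from a minimal realization $F(z)=zM+D+B^T(zI_n-A)^{-1}B$ with $M\geq 0$, $A^T=-A$ and $D^T=-D$. Since $A$ is real skew-symmetric, an orthogonal change of coordinates (exactly the one used in the proof of Corollary~\ref{C:EigenBound}) puts $A$ in the block-diagonal form $\diag(\omega_1 J_{k_1},\ldots,\omega_s J_{k_s},0_{n_0})$ with $\omega_j>0$ distinct and $J_k=\sbm{0 & I_k \\ -I_k & 0}$, while $B$ splits as $\sbm{B_1 \\ \vdots \\ B_s \\ \tilde B}$. The identity $(\omega_j J_{k_j})^2=-\omega_j^2 I$ gives $(zI-\omega_j J_{k_j})^{-1}=(zI+\omega_j J_{k_j})/(z^2+\omega_j^2)$, so expanding $B^T(zI-A)^{-1}B$ yields \eqref{FosterMat2} with $Q=M$, $R=D$, $Q_j=B_j^TB_j\geq 0$, $R_j=\omega_j B_j^T J_{k_j}B_j$ skew-symmetric, and the zero block contributing only a simple pole $\tilde B^T\tilde B/z$ (fitting into \eqref{FosterMat2} via an $\omega_j=0$ summand with $R_j=0$). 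The inequality \eqref{FosterCon2} then becomes $\omega_j B_j^T(I\pm iJ_{k_j})B_j\geq 0$, and this follows at once from the elementary fact that $iJ_k$ is Hermitian with $(iJ_k)^2=I$, so that $I\pm iJ_k\geq 0$.

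For sufficiency I would reverse this construction. Condition \eqref{FosterCon2} says $H_j:=\omega_j Q_j+iR_j$ is positive semidefinite Hermitian for each $j$ with $\omega_j>0$. I would factor $H_j=V_j^*V_j$ with $V_j\in\BC^{k_j\times m}$, split $V_j=U_j+iW_j$ into real and imaginary parts, and expand to get $\omega_j Q_j=U_j^TU_j+W_j^TW_j$ and $R_j=U_j^TW_j-W_j^TU_j$ simultaneously. Setting $B_j=\omega_j^{-1/2}\sbm{U_j \\ W_j}$ and $A_j=\omega_j J_{k_j}$, a direct computation then verifies $B_j^TB_j=Q_j$, $\omega_j B_j^T J_{k_j}B_j=R_j$, and hence $B_j^T(zI-A_j)^{-1}B_j=(zQ_j+R_j)/(z^2+\omega_j^2)$. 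Any summand with $\omega_j=0$ has $R_j=0$ forced by \eqref{FosterCon2}, and is realized by a zero block together with any factorization $Q_j=\tilde B^T\tilde B$. Assembling all pieces into a block-diagonal skew-symmetric $A$, a stacked $B$, and $M=Q$, $D=R$, one obtains a realization of the form \eqref{RealFormPRO} meeting the symmetry and positivity parts of \eqref{RealFormPROcon}. If the resulting $(A,B)$ is not controllable, a standard compression to the controllable subspace by an orthogonal basis preserves the skew-symmetry of $A$ and leaves $D$, $M$ untouched, after which Theorem~\ref{T:Transfer} delivers $F\in\PRO_m$.

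The main obstacle is the factorization step in the sufficiency direction, namely turning the complex Hermitian inequality \eqref{FosterCon2} into a real, skew-symmetric-compatible state-space realization. The key device is the correspondence $V=U+iW\leftrightarrow (U,W)$: the positivity of $V^*V$ simultaneously packages the Gram-type data $U^TU+W^TW$ (producing $\omega_j Q_j$) and the commutator data $U^TW-W^TU$ (producing $R_j$), which is precisely the data needed to let a $B_j$ interact correctly with the block $\omega_j J_{k_j}$. Once this observation is in hand the remaining work is bookkeeping, and necessity is essentially the same calculation run in reverse.
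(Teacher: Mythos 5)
Your proof is correct, and while the necessity half runs along exactly the same lines as the paper (orthogonal block-diagonalization of the skew-symmetric $A$, $Q_j=B_j^TB_j$, $R_j=B_j^TA_jB_j$, and \eqref{FosterCon2} from $-\om_j I\leq iA_j\leq \om_j I$ compressed by $B_j$), your sufficiency argument takes a genuinely different and slicker route than the paper's. The paper proves sufficiency via Lemma \ref{L:Lift}: it factors $Q_j=B_0^TB_0$, forms the compressed skew-symmetric matrix $S_0=(B_0^+)^TR_jB_0^+$, diagonalizes it into $2\times 2$ rotation blocks with frequencies $\al\in[0,\om_j]$, and then lifts every sub-resonant block ($\al<\om_j$) to the full frequency $\om_j$ by an explicit orthogonal rotation at the cost of doubling that block's state dimension, finally verifying controllability of the resulting pair by hand. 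Your device of factoring the positive semidefinite Hermitian matrix $H_j=\om_jQ_j+iR_j=V_j^*V_j$ and splitting $V_j=U_j+iW_j$ so that $B_j=\om_j^{-1/2}\sbm{U_j\\ W_j}$ simultaneously delivers $B_j^TB_j=Q_j$ and $B_j^T(\om_jJ_{k_j})B_j=R_j$ replaces all of that with a two-line computation; your deferral of controllability to a final orthogonal compression onto the controllable subspace is also sound, since that subspace is $A$-invariant and $A$ is normal, so its orthogonal complement is $A$-invariant too and skew-symmetry survives the restriction. What the paper's longer construction buys is explicitness: the controllable pair is produced blockwise, which is what makes the assembled realization minimal on the nose and underlies the pole-multiplicity formula of Remark \ref{R:pole-multi}; your compression step is existential and does not by itself tell you the minimal state dimension (though one can check that your block size $k_j=\rank H_j$ is in fact already the pole multiplicity, since the residue of $F$ at $i\om_j$ is $\tfrac{1}{2\om_j}(\om_jQ_j-iR_j)$, which has the same rank as $H_j$ — making your construction minimal without the compression, had you wished to verify controllability directly).
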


\begin{remark}\label{R:Synth}
The observation that functions in $\PRO_m$ admit a Foster representation \eqref{FosterMat2}, as the natural analogue of the scalar representation \eqref{FosterScalar}, already appears in many classical texts, e.g., Chapter 7 in \cite{N66} and Chapter 9  in \cite{AV73}, but without the precise condition \ref{FosterCon2} required for the reverse direction claim. We have also not encountered condition \ref{FosterCon2} in more recent papers on positive real (odd or lossless) functions, e.g., \cite{BR14,CF09,XL04,ZLX02,B11,RRV15}. In both \cite{N66,AV73} significant attention is given to the reverse direction, in the context of the impedance synthesis problem, but via different approaches. In \cite[pp.\ 206--212]{N66} an intricate recursive procedure is applied, while in \cite[Section 9.3]{AV73} it is used that any minimal realization of a function in $\PRO_m$ must satisfy a KYP equality from the corresponding bounded real lemma. To the best of our knowledge,  condition \eqref{FosterCon2} has not appeared in the literature before (apart from the proceeding paper \cite{tHN20} where we announced it).
\end{remark}

\begin{proof}[\bf Proof of necessity part of Theorem \ref{T:Main2}]
We first proof the necessity of \eqref{FosterMat2} and \eqref{FosterCon2}. Let $F\in\PRO_m$. Then $F$ admits a representation as in \eqref{RealFormPRO} with $A,B,M,D$ real matrices satisfying \eqref{RealFormPROcon}. We may assume $n$ is even, at the expense of loosing controllability. Indeed, if $n$ is odd, one can simply add a zero row at the bottom of $B$ and extend $A$ with a zero row at the bottom and zero column at the right, this does not affect the validity of \eqref{RealFormPRO} and only the controllability in \eqref{RealFormPROcon} falls away. Since $A=-A^T$ is a real matrix, all nonzero eigenvalues are on $i\BR$ and come in complex conjugate pairs, while $\dim\kr A$ is also even, since $n$ is even. Let $i\om_1,\ldots,i\om_s$ be the eigenvalues on $i\BR_+$. If $\om_j\neq0$, then let $k_j$ be the pole-multiplicity of $i\om_j$, while $k_j =(\dim\kr A)/2$ if $\om_j=0$. Then $2\sum_{j=1}^s k_j=n$. Also, there exists an orthogonal matrix $U$ so that
\[
U^T A U =\diag (A_1,\ldots, A_s),\quad \mbox{with } A_j= \om_j \mat{cc}{0&  I_{k_j}\\ - I_{k_j} & 0}.
\]
Now decompose $U^T B$ accordingly as
\[
U^T B=\mat{c}{B_1\\ \vdots\\ B_s}, \quad \mbox{with } B_j\in \BR^{2 k_j \times m}.
\]
Set $Q=M$, $R=D$, $Q_j=B_j^T B_j$ and $R_j = B_j^T A_j B_j$ for $j=1,\ldots,s$. We then have
\begin{align*}
F(z) &=z M +D + B^T (z I_n -A)^{-1}B = zQ + R + B^T U (z I_n -U^TAU)^{-1}U^TB\\
& = zQ + R + \sum_{j=1}^s B_j^T (zI_{2k_j}-A_j)^{-1}B_j\\
& = zQ + R + \sum_{j=1}^s B_j^T \mat{cc}{zI_{k_j}&-\om_j I_{k_j}\\ \om_j I_{k_j} & z I_{k_j}}^{-1}B_j \\
& = zQ + R + \sum_{j=1}^s \frac{1}{z^2 + \om_j^2} B_j^T \mat{cc}{zI_{k_j}&\om_j I_{k_j}\\ -\om_j I_{k_j} & z I_{k_j}}B_j\\
&= zQ + R + \sum_{j=1}^s \frac{1}{z^2 + \om_j^2} B_j^T (z I_{2k_j}+ A_j)B_j\\
&= zQ + R + \sum_{j=1}^s \frac{1}{z^2 + \om_j^2} (zQ_j + R_j).
\end{align*}
Hence \eqref{FosterMat2} holds. For $j=1,\ldots,s$, $Q,Q_j\geq 0$ and $R,R_j$ are skew-symmetric, since $A_j$ is skew-symmetric for each $j$. Furthermore, we have
$ -\om_j I_{2k_j} \leq i A_j \leq \om_j I_{2k_j}$, which provides \eqref{FosterCon2} after multiplying by $B_j$ on the right and $B_j^T$ on the left.
\end{proof}

For our proof of the sufficiency of \eqref{FosterMat2} and \eqref{FosterCon2} we require the following lemma. We note here that in \cite{tHN20} a shorter, though less constructive, proof of the sufficiency of \eqref{FosterMat2} and \eqref{FosterCon2} was given, using the convex invertible cone structure of $\PRO_m$. The advantage of the proof given here is that it enables us to explicitly construct a realization as in Theorem \ref{T:Transfer} starting from the Foster representation formula \eqref{FosterMat2}.

\begin{lemma}\label{L:Lift}
Let $\om>0$, $\BR^{m\times m}\ni Q\geq 0$ and $R\in\BR^{m\times m}$ skew-symmetric so that
\begin{equation}\label{QRineq}
-\om Q\leq iR \leq \om Q.
\end{equation}
Then there exists an integer $q\geq 0$ and $B\in\BR^{2q\times m}$ so that
\begin{equation}\label{QRfact}
Q=B^TB\quad \mbox{and}\quad R=B^TAB\quad \mbox{with}\quad  A=\sbm{0& \om I_q\\ -\om I_q & 0}
\end{equation}
and $(A,B)$ is a controllable pair.
\end{lemma}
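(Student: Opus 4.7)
The key observation is that since $R$ is real skew-symmetric, $iR$ is Hermitian, so the hypothesis \eqref{QRineq} is a genuine Hermitian matrix inequality, and in fact amounts to the single condition $\om Q + iR \geq 0$ on $\BC^m$ (the upper and lower halves of \eqref{QRineq} being complex conjugates of one another). My plan is to encode \emph{both} required identities in \eqref{QRfact} simultaneously as a single complex Cholesky-type factorization of this positive semidefinite matrix.

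Concretely, set $q = \rank(\om Q + iR)$ and factor
\[
\om Q + iR = \om\, C^* C, \qquad C \in \BC^{q \times m},
\]
with $C$ of full row rank $q$ (for instance via the spectral theorem applied to $Q + iR/\om$). Split $C = B_1 + iB_2$ with $B_1, B_2 \in \BR^{q \times m}$. Expanding
\[
C^* C = \bigl(B_1^T B_1 + B_2^T B_2\bigr) + i\bigl(B_1^T B_2 - B_2^T B_1\bigr)
\]
and comparing real and imaginary parts with $Q + iR/\om$ yields $B_1^T B_1 + B_2^T B_2 = Q$ and $\om(B_1^T B_2 - B_2^T B_1) = R$. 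Setting $B = \sbm{B_1 \\ B_2}$, a direct block multiplication with $A = \sbm{0 & \om I_q \\ -\om I_q & 0}$ then produces precisely $B^T B = Q$ and $B^T A B = R$.

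For controllability of $(A,B)$, I would apply the Hautus/PBH eigenvector test. Since $A$ has only the eigenvalues $\pm i\om$, it suffices to rule out, for each $\la \in \{\pm i\om\}$, a nonzero $w \in \BC^{2q}$ with $A^T w = \la w$ and $w^T B = 0$. Using $A^T = -A$, a quick calculation shows the eigenvectors of $A^T$ for $\la = i\om$ and $\la = -i\om$ take the forms $w = \sbm{x \\ -ix}$ and $w = \sbm{x \\ ix}$ respectively, with $x \in \BC^q$; correspondingly $w^T B$ equals $x^T(B_1 - iB_2) = x^T\ov{C}$ in the first case and $x^T(B_1 + iB_2) = x^T C$ in the second. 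Since $C$ has full row rank $q$ by construction (and therefore so does $\ov{C}$), each of these forces $x = 0$, so no obstruction to controllability exists.

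The main obstacle I expect is the controllability verification, which requires a somewhat careful PBH eigenvector computation on the specific block form of $A$; the identities $B^T B = Q$ and $B^T A B = R$, by contrast, drop out almost automatically once the Hermitian positive semidefinite structure of $\om Q + iR$ is recognized and exploited.
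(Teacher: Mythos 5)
Your proof is correct, and it takes a genuinely different route from the paper's. You package the two-sided hypothesis as the single Hermitian condition $\om Q+iR\geq 0$ and take a complex rank factorization $\om Q+iR=\om\, C^*C$ with $C\in\BC^{q\times m}$ of full row rank; splitting $C=B_1+iB_2$ into real and imaginary parts yields both identities in \eqref{QRfact} at once, and the PBH test reduces controllability to injectivity of $C^T$ and $\ov{C}^T$, which is exactly the full row rank of $C$ — all of this checks out, including the identification of the left eigenvectors of $A$ as $\sbm{x\\ \mp ix}$. The paper instead works entirely over $\BR$: it factors $Q=B_0^TB_0$, compresses $R$ to the skew-symmetric $S_0=(B_0^+)^TRB_0^+$ satisfying $-\om I\leq iS_0\leq \om I$, block-diagonalizes $S_0$ into $2\times 2$ skew blocks with parameters $\al_j\in[0,\om]$, and then \emph{lifts} each block with $\al_j<\om$ into a $4\times 4$ block orthogonally similar to $\om\sbm{0&I_2\\-I_2&0}$ while padding $B$ with zero rows, checking controllability by a direct rank count on $[\wtilB_1\ \ \wtilA_1\wtilB_1]$ and treating $\rank Q$ odd and even separately. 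Your argument is shorter, avoids that parity case split entirely, and in fact produces the same state dimension: one can verify that $\rank(\om Q+iR)=\rank Q-l$, with $l$ the number of eigenvalues of $S_0$ equal to $i\om$, which is precisely the paper's $q$. What the paper's longer construction buys in exchange is an explicitly real, orthogonal-similarity-based recipe whose output plugs directly into the real realization of Theorem \ref{T:Transfer} and makes the pole-multiplicity count of Remark \ref{R:pole-multi} transparent; if you adopt your complex factorization, you would want to note (as you implicitly do via $B=\sbm{B_1\\B_2}$) that the resulting realization is still real.
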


\begin{proof}[\bf Proof]
Throughout the proof, for any matrix $C$ we define $\La_C=\sbm{0& C\\ -C^T& 0}$. Assume $\om$, $Q$ and $R$ are as in the lemma. Factor $Q=B_0^TB_0$ with $B_0\in\BR^{p \times m}$ and $p=\rank Q$. Then $B_0$ is right-invertible. We write $B_0^{+}$ for the Moore-Penrose right inverse of $B_0$. Set $S_0=(B_0^+)^T R B_0^+\in\BR^{p \times p}$. Note that \eqref{QRineq} implies that $\kr Q\subset \kr R$ and $\ran R \subset \ran Q$. Therefore, we have
\[
B_0^T S_0 B_0= B_0^T (B_0^+)^T R B_0^+ B_0= P_{\im Q} R P_{\kr Q^\perp}=R.
\]
Moreover, $S_0$ is skew-symmetric and \eqref{QRineq} implies $-\om I_p\leq i S_0 \leq \om I_p$. In particular, the eigenvalues of $S_0$ come in complex conjugate pairs $(i\al,-i\al)$ with $\al\in[0,\om]$, except possibly 0 which may have odd multiplicity.

We first consider the case that $p$ is even, say $p=2 k$. Then there exists an orthogonal matrix $U_0\in\BR^{p\times p}$ so that $S_0=U_0\diag(\La_{\al_1},\ldots,\La_{\al_k})U_0^T$ with $\om\geq\al_1\geq \ldots\geq \al_k\geq 0$ so that $i\al_j$, $j=1,\ldots,k$ are the eigenvalues of $S_0$ on $i\BR_+$, multiplicities taken into account. Define $B_1=U_0^T B_0$ and $A_1=\diag(\La_{\al_1},\ldots,\La_{\al_k})$. Then $B_1^T B_1=Q$ and $B_1^T A_1 B_1=B_0^T S_0 B_0=R$.

Let $1\leq l\leq k$ be so that $\al_1=\cdots = \al_l=\om$ and $\al_{l+1}<\om$, setting $l=0$ in case $\al_1<\om$ and $l=k$ if $\al_j=\om$ for all $j$. Set $q=l+2(k-l)=k+(k-l)$ and define $A=\La_{\om I_q}$ in $\BR^{2q \times 2q}$ as above. Then there exists a permutation matrix $W$ so that $W^T AW=\diag(\La_{\al_1},\ldots \La_{\al_l}, \La_{\om I_2},\ldots,\La_{\om I_2})=:A_2$, using $\al_1=\cdots = \al_l=\om$.

For $j=l+1,\ldots,k$  set $U_j=\om^{-1}\sbm{\al_j & \eta_j\\ -\eta_j & \al_j}$, where $\eta_j=(\om^2-\al_j^2)^{1/2}$, except if $\om=0$ when we set $U_j=\La_{1}$, and define $\what{U}_j=\sbm{U_j & 0\\ 0& I_2}$. Note that both $U_j$ and $\what{U}_j$ are orthogonal and we have $\La_{\om U_j}= \what{U}_j \La_{\om I_2} \what{U}_j^T$. Now define the orthogonal matrix $\what{U}=\diag(I_2,\ldots,I_2, \what{U}_{l+1}^T,\ldots,\what{U}_k^T)$ in $\BR^{2q\times 2q}$. Then
\[
\what{U}^T W^T A W \what{U}=\what{U}^T A_2 \what{U}=\diag(\La_{\al_1},\ldots \La_{\al_l},\La_{\om U_{l+1}},\ldots,\La_{\om U_{k}})=:\wtilA_1\in\BR^{2q\times 2q}.
\]

Note that $A_1$ can be obtained by compressing $\wtilA_1$ to the rows and columns indexed by $1,\ldots,2l,2l+1,2l+3,\ldots,2l+4(k-l)-1=2q-1$. Let $b_j$, $j=1,\ldots,2k$, be the $j$-th row of $B_1$. We now extend $B_1$ to a matrix  $\wtilB_1\in\BR^{2q\times m}$ by
\begin{equation}\label{tilB1}
\wtilB_1^T=\mat{cccccccccc}{b_1^T &\cdots&b_{2l}^T&b_{2l+1}^T&0&b_{2l+2}^T&0&\cdots&b_{2k}^T&0}.
\end{equation}
Then we have $Q=\wtilB_1^T\wtilB_1$ and $R=\wtilB_1^T \wtilA_1 \wtilB_1$. Now set $B=W\what{U}\wtilB_1$. Since $A=W\what{U}\wtilA_1\what{U}^T W^T$, with $W$ and $\what{U}$ orthogonal, we find that \eqref{QRfact} holds. Hence, it remains to show that the pair $(A,B)$  is controllable. Note that $A^jB= W\what{U}\wtilA_1^j \wtil{B}_1$. Therefore, it is equivalent to show $(\wtilA_1,\wtil{B}_1)$ is a controllable pair. Note that
\begin{equation}\label{mincomp1}
\mat{c}{\wtilB_1^T\\\wtilB_1^T\wtilA_1^T}=\mat{cccccc}{L_1&\cdots& L_l&H_{l+1}&\cdots&H_{q}}
\end{equation}
with for $j=1,\ldots,l$ and $s=l+1,\ldots,q$ we define
\[
 L_j=\mat{cc}{b_{2j-1}^T & b_{2j}^T\\ \om b_{2j}^T & \om b_{2j-1}^T},\, H_s=\mat{cc}{b_{2s-1}^T &0\\\al_s b_{2s}^T & -\eta_{s} b_{2s}^T}\in\BR^{2m \times 2}.
\]
By construction $\{b_1^T,\ldots,b_{2k}^T\}$ forms a set of linearly independent vectors. Hence if the matrix \eqref{mincomp1} were to have linearly dependent columns, they must be among the columns indexed by $2l+2,2l+4,\ldots,2q$. However, this can also not occur, since $\eta_{l+1},\ldots,\eta_{2k}\neq 0$ and $\{b_{2l+1}^T,\ldots,b_{2k}^T\}$ is a set of linearly independent vectors. This shows that $[\wtilB_1\  \wtilA_1\wtilB_1]$ has full row rank, hence $[\wtilB_1\  \wtilA_1\wtilB_1\  \cdots\  \wtilA_1^{2q-1}\wtilB_1]$ has full row rank, provided $q\geq 1$. In case $q=0$, controllability is trivial. Hence we find that the pair $(A,B)$ is controllable.

Finally, we consider the case where $p=\rank(Q)$ is odd, say $2k+1$. The above procedure can be followed with a few modifications. We have $S_0=U_0 A_1 U_0^T$ where now $A_1=\diag(\La_{\al_1},\ldots,\La_{\al_k},0)\in\BR^{2k+1 \times 2k+1}$. Set
\begin{align*}
A_2&=\diag(\La_{\al_1},\ldots,\La_{\al_l},\La_{\om I_2},\ldots,\La_{\om I_2},\La_\om),\\
\what{U}&=\diag(I_2,\ldots,I_2, \what{U}_{l+1}^T,\ldots,\what{U}_k^T, I_2),
\end{align*}
hence we add $2\times 2$ diagonal blocks $\La_\om$ and $I_2$, respectively. Next define $\wtilA_1=\what{U}^T A_2\what{U}$ and extend $B_1=U_0^T B_0$ to $\wtilB_1$ as in \eqref{tilB1} except that $\wtilB_1$ now has $b_{2k+1}$ and 0 as its last two rows. It is easy to see that $A_2=W^T A W$ holds for some permutation matrix $W$ and $A$ as in \eqref{QRfact} where now $q=l+2(k-l)+1$. Following the remainder of the proof for the case where $\rank(Q)$ is even, with $B=W\what{U}\wtilB_1$, we see that \eqref{QRfact} holds and that $(A,B)$ is controllable, because $(\wtilA_1,\wtilB_1)$ is controllable. For the latter, note that $b_1,\ldots,b_{2k+1}$ are linearly independent vectors and in the above matrix $\sbm{\wtilB_1^T\\ \wtilB_1^T \wtilA_1^T}$ after the modification of the present paragraph the two columns $\sbm{b_{2k+1}^T & 0\\ 0 & \om b_{2k+1}^T }$ are added leading to a new $\sbm{\wtilB_1^T\\ \wtilB_1^T \wtilA_1^T}$ that still has full column rank.
\end{proof}

\begin{proof}[\bf Proof of sufficiency part of Theorem \ref{T:Main2}]
Using the previous lemma, we now show how a realization as in Theorem \ref{T:Transfer} of a $F\in\PRO_m$ can be obtained from its Foster representation. Hence, assume $F\in\PRO_m$ is given by \eqref{FosterMat2}--\eqref{FosterCon2}. Without loss of generality $\om_j\neq \om_k$ if $j\neq k$. For $j=1,\ldots,s$ apply the factorization from Lemma \ref{L:Lift}, i.e., $Q_j=B_j^TB_j$ and $R_j=B_j^T A_j B_j$ with $A_j=\sbm{0& \om_j I_{q_j}\\ -\om_j I_{q_j} &0}$. We get
\begin{align*}
\frac{1}{z^2 + \om_j^2} \left(zQ_j + R_j \right)
& = \frac{1}{z^2 + \om_j^2}B_j^T \left(zI_{2q_j} + A_j \right) B_j
= \frac{1}{z^2 + \om_j^2}B_j^T  \sbm{zI_{q_j} & \om_j I_{q_j}\\ -\om_j I_{q_j} & zI_{q_j} }  B_j\\
&= B_j^T  \sbm{zI_{q_j} & -\om_j I_{q_j}\\ \om_j I_{q_j} & zI_{q_j} }^{-1}  B_j
= B_j^T \left(zI_{2q_j} - A_j \right)^{-1} B_j.
\end{align*}
Now set $M=Q$, $D=R$, $A=\diag(A_1,\ldots,A_s)$ and $B^T=[B_1^T\, \cdots\, B_s^T]$. It is clear from the above computation that $F$ in \eqref{FosterMat2} is also given by \eqref{RealFormPRO} with this choice of $M$, $D$, $A$ and $B$. To see that the pair $(A,B)$ is controllable, note that
\[
\mat{cc}{A-\la I & B}=\mat{ccccc}{A_1-\la I& 0 & \cdots & 0 & B_1\\
0&A_2-\la I&&&B_2\\
\vdots&\ddots&\ddots&\vdots&\vdots\\
\vdots&\ddots&A_{s-1}-\la I&0&B_{s-1}\\
0&\cdots&0&A_s-\la I&B_s}.
\]
Clearly, for $\la\neq \pm i\om_j$, for $j=1,\ldots,s$, the matrix has full row rank. For $\la=i\om_j$ or $\la=-i\om_j$ the $(k,k)$ block entries for $k\neq j$ are still invertible, since $\om_j\neq \om_k$, and the rows in the $j$-th block row are independent because $(A_j,B_j)$ is a controllable pair. Hence $(A,B)$ is a controllable pair, as claimed.
\end{proof}

\begin{remark}\label{R:pole-multi}
Apart from a concrete procedure to determine a minimal realization for $F$ explicitly from the Foster representation, the above proof also shows how the pole-multiplicities can be computed. For the pole at $\infty$ it is clear its multiplicity is $\rank Q$. Fix a finite pole $i\om_j$ and let $R_j$ and $Q_j$ be as in \eqref{FosterMat2}. In this case, the multiplicity of $\om_j$ in not necessarily equal to $\rank Q_j$, but rather the size of the matrix $A_j$ obtained from the construction of Lemma \ref{L:Lift}. Set $p_j=\rank Q_j$ and determine a factorization $Q_j=B_{0,j}^TB_{0,j}$ with $B_{j,0}\in\BR^{p_j\times m}$, which is unique up to multiplication with a $p_j\times p_j$ unitary matrix. Set $S_{j}:= (B_{0,j}^+)^T R_j B_{0,j}^+$, with $B_{0,j}^+$ the Moore-Penrose right-inverse of $B_{0,j}$. Then $S_j$ is skew-symmetric and all eigenvalues of $S_j$ on $i\BR_+$ are bounded by $i\om_j$. Let $l_j$ be the number of eigenvalues equal to $i\om_j$. Then the pole-multiplicity of $\om_j$ is equal to $l_j+ 2(p_j/2 -l_j)$ in case $p_j$ is even and $l_j+ 2((p_j+1)/2 -l_j)$ in case $p_j$ is odd.
\end{remark}

\section{Inversion}\label{S:Inverse}

Since $\PRO_m$ is a convex invertible cone, for a function $F\in\PRO_m$, it follows that $F^{-1}$ is also in $\PRO_m$, provided $F$ is invertible, i.e., $\det F(z)\not \equiv 0$. In this section, we determine when $F\in \PRO_m$ is invertible and provide realization formulas for its inverse, of the form as in Section \ref{S:Transfer}, in case $F$ is invertible. Throughout this section we shall assume $F$ is given in the transfer function form of Theorem \ref{T:Transfer}, that is,
\begin{equation}\label{RealFormPRO2}
F(z)=z M +D + B^T (z I_n -A)^{-1}B,
\end{equation}
for some integer $n\geq 0$, $M,D\in\BR^{m \times m}$, $B\in\BR^{n \times m}$ and $A\in\BR^{n \times n}$ with
\begin{equation}\label{RealFormPROcon2}
M\geq 0,\quad A^T=-A,\quad D^T=-D,\quad \mbox{$(A,B)$ controllable.}
\end{equation}

By the inversion result for descriptor systems from \cite{MPR07}, we obtain the following characterization for invertibility of $F$ and of its inverse.

\begin{proposition}\label{P:inv1}
Let $F\in\PRO_m$ be given by \eqref{RealFormPRO2}-\eqref{RealFormPROcon2}. Then for any $z\in\BC$ we have
\begin{equation}\label{Regular1}
\det F(z)\not\equiv 0
 \ \ \Longleftrightarrow\ \
\det \left( \mat{cc}{zI_n & 0 \\ 0 & zM} -\mat{cc}{A & B \\ -B^T &-D} \right)\not\equiv 0.
\end{equation}
Moreover, in that case we have
\begin{equation}\label{Inverse1}
F(z)^{-1}=\mat{cc}{0 & I_m} \left( \mat{cc}{zI_n & 0 \\ 0 & zM} -\mat{cc}{A & B \\ -B^T &-D} \right)^{-1}\mat{c}{0\\ I_m}.
\end{equation}
\end{proposition}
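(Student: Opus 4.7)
The plan is to recognise the pencil appearing in the statement as the block matrix
\[
P(z) := \mat{cc}{zI_n-A & -B\\ B^T & zM+D},
\]
and to observe that $F(z)$ is precisely the Schur complement of its $(1,1)$ block $zI_n-A$. Since $\det(zI_n-A)\not\equiv 0$ (it is the characteristic polynomial of $A$), the block $zI_n-A$ is invertible as a rational matrix function, and a direct computation of the Schur complement of $P(z)$ with respect to $zI_n-A$ yields $(zM+D)-B^T(zI_n-A)^{-1}(-B)=F(z)$.

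With this identification in hand, the first claim \eqref{Regular1} follows at once from the block determinant formula $\det P(z)=\det(zI_n-A)\cdot\det F(z)$: since the first factor on the right is a nonzero polynomial, $\det P(z)\not\equiv 0$ is equivalent to $\det F(z)\not\equiv 0$. For the inverse formula \eqref{Inverse1}, I would invoke the standard block inversion identity (Banachiewicz): whenever $P(z_0)$ and $z_0 I_n-A$ are both invertible, the $(2,2)$ block of $P(z_0)^{-1}$ equals the inverse of the Schur complement, namely $F(z_0)^{-1}$. Left- and right-multiplication by $\mat{cc}{0 & I_m}$ and $\mat{c}{0\\ I_m}$ extracts exactly this block, so \eqref{Inverse1} holds on a Zariski-dense open subset of $\BC$; both sides being rational matrix functions in $z$, the identity then propagates to the full domain of definition.

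There is no serious obstacle here; the argument is almost entirely bookkeeping. The only point requiring a little care is sign-matching in the Schur complement: the pencil as written has $-B$ in position $(1,2)$ and $+B^T$ in position $(2,1)$, and it is exactly this combination of signs that makes the Schur-complement correction $+B^T(zI_n-A)^{-1}B$ rather than its negative, so that $F(z)$ (and not some sign-twisted variant) emerges. As the paper signals with its reference to \cite{MPR07}, one could alternatively obtain both claims by regarding $P(z)$ as the system pencil of a descriptor realization of $F$ — with input and output ports corresponding to the second block row and column — and then invoking the general descriptor-system inversion theorem; but the self-contained Schur-complement derivation above is the most direct route.
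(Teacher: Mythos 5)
Your proof is correct, but it takes a genuinely different and more self-contained route than the paper. You observe directly that $F(z)=zM+D+B^{T}(zI_n-A)^{-1}B$ is the Schur complement of the $(1,1)$ block in the pencil $P(z)=\sbm{zI_n-A & -B\\ B^{T} & zM+D}$, so that $\det P(z)=\det(zI_n-A)\det F(z)$ gives \eqref{Regular1} immediately, and the Banachiewicz block-inversion identity identifies $F(z)^{-1}$ with the $(2,2)$ block of $P(z)^{-1}$, which is exactly what \eqref{Inverse1} extracts; rational continuation from the cofinite set where both $P(z)$ and $zI_n-A$ are invertible finishes the argument. The paper instead passes through the minimal Weierstrass descriptor realization of Theorem \ref{T:Weierstrass}, invokes the descriptor-system inversion formula of \cite[Theorem 3.1]{MPR07} to get a $4\times 4$ block pencil, and then reduces it by a Schur complement with respect to the always-invertible nilpotent block to arrive at the same $\Delta(z)$. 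What your argument buys is elementarity: no external inversion theorem and no detour through descriptor realizations are needed, and the sign bookkeeping you flag is indeed the only delicate point. What the paper's route buys is consistency with the descriptor-system framework used throughout Section \ref{S:Inverse} (the subsequent minimality analysis in Proposition \ref{P:inv2} and Theorem \ref{T:inv1} is carried out in that language), but for this particular proposition your direct derivation is entirely adequate and arguably cleaner.
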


\begin{proof}[\bf Proof]
From \eqref{RealFormPRO2}-\eqref{RealFormPROcon2} one obtains the descriptor realization form \eqref{Weierstrass1}-\eqref{Weierstrass2}, where $q=\rank M$ and $K\in\BR^{q\times m}$ is so that $K^TK=M$. By the inversion formula for descriptor systems from Theorem 3.1 in \cite{MPR07} it follows that
\begin{align*}
&F(z)^{-1}
=\mat{cc}{0 & I_m} \left(z\mat{cc}{E^\circ &0\\0&0} - \mat{cccc}{A^\circ & B^\circ\\ C^{\circ T} & D^\circ} \right)^{-1} \mat{c}{0\\ -I_m}\\
 &= \mat{cccc}{0 & 0 & 0 & I_m}
  \mat{cccc}{zI_n-A&0&0& -B\\ 0&-I_q&zI_q&0\\ 0&0&-I_q&K\\ -B^T&-K^T&0&-D}^{-1} \mat{c}{0\\ 0\\ 0\\ -I_m}\\
  &= \mat{cc|cc}{0 & I_m & 0 & 0}
  \mat{cc|cc}{zI_n-A&-B&0& 0\\ -B^T& -D&-K^T&0\\ \hline  0& 0&-I_q&zI_q\\ 0&K&0&-I_q}^{-1}
\mat{c}{0\\ -I_m\\ \hline 0\\ 0}
\end{align*}
and $\det F(z)\not\equiv 0$ precisely when the $4 \times 4$ block matrix is invertible. Since the right lower $2 \times 2$  block $\sbm{-I&z I\\ 0&-I}$ is invertible for all $z$, it follows that the above inverse exists if and only if the Schur complement with respect to this $2 \times 2$ block:
\begin{align*}
\De(z)&:=\mat{cc}{zI_n-A & -B\\ -B^T&-D}-\mat{cc}{0&0\\-K^T & 0}\mat{cc}{-I_q&z I_q\\ 0&-I_q}^{-1}\mat{cc}{0& 0\\ 0&  K}\\
&=z\mat{cc}{I_n&0\\0&-M}- \mat{cc}{A&B\\B^T &D}
\end{align*}
is invertible. Via the standard Schur complement inversion formula, cf., \cite{Z05}, one now obtains that
\begin{align*}
F(z)^{-1}
&= \mat{cc|cc}{0 & I_m & 0 & 0}
  \mat{c|c}{\De(z)^{-1} & *\\ \hline * & *}
\mat{c}{0\\ -I_m\\ \hline 0\\ 0}\\
&=\mat{cc}{0&I_m} \left(z\mat{cc}{I_n&0\\0&-M}- \mat{cc}{A&B\\B^T &D}\right)^{-1} \mat{c}{0 \\ -I_m}\\
&=\mat{cc}{0&I_m} \left(z\mat{cc}{I_n&0\\0&M}- \mat{cc}{A&B\\-B^T &-D}\right)^{-1} \mat{c}{0 \\ I_m},
\end{align*}
which proves our claim.
\end{proof}

Next we provide an easily verifiable criteria to determine when $\det F(z)\not\equiv 0$.

\begin{lemma}\label{L:InvCon}
Let $F\in\PRO_m$ be given by \eqref{RealFormPRO2}-\eqref{RealFormPROcon2}. Then $\det F(z)\not\equiv 0$ if and only if $\kr\left(\sbm{B\\D}|_{\kr M}\right)=\{0\}$.
\end{lemma}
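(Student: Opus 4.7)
My plan is to prove the two directions of the biconditional separately, invoking Proposition \ref{P:inv1} for the harder direction. The easy direction is by direct substitution: if $v\neq 0$ lies in $\ker M\cap\ker B\cap\ker D$, then \eqref{RealFormPRO2} yields $F(z)v=zMv+Dv+B^T(zI_n-A)^{-1}Bv\equiv 0$, so $\det F(z)\equiv 0$.

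For the converse I would use Proposition \ref{P:inv1} to rephrase $\det F(z)\not\equiv 0$ as regularity of the pencil
\[
\mathcal{L}(z)=zE-A',\qquad E:=\sbm{I_n & 0\\ 0 & M},\quad A':=\sbm{A & B\\ -B^T & -D}.
\]
The conditions \eqref{RealFormPROcon2} endow $\mathcal{L}$ with a strong symmetry profile: $E$ is symmetric with $E\geq 0$, while $A'$ is skew-symmetric. I would then argue that, for any such pencil, singularity forces $\ker E\cap \ker A'\neq\{0\}$. Granting this structural claim, the lemma follows at once: any nonzero element of $\ker E$ has the form $\sbm{0\\ v}$ with $v\in\ker M$, and $A'\sbm{0\\ v}=\sbm{Bv\\ -Dv}$ vanishes precisely when $v\in\ker B\cap\ker D$, thereby producing a nonzero element of $\ker(\sbm{B\\ D}|_{\ker M})$.

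The main task is then to establish the structural claim. I would pick any nonzero polynomial $p(z)=\sum_{k=0}^{N} z^{k}p_k$ with $\mathcal{L}(z)p(z)\equiv 0$ and match coefficients to obtain the Jordan-chain relations $Ep_N=0$, $A'p_k=Ep_{k-1}$ for $1\leq k\leq N$, and $A'p_0=0$. The plan is to show inductively that $A'p_k=0$ and $Ep_k=0$ for every $k$: assuming $A'p_k=0$ (base case $k=0$ given), the identity
\[
\langle Ep_k,p_k\rangle=\langle A'p_{k+1},p_k\rangle=-\langle p_{k+1},A'p_k\rangle=0
\]
uses the chain relation $A'p_{k+1}=Ep_k$ together with the skew-symmetry of $A'$; the semidefiniteness $E\geq 0$ then forces $Ep_k=0$, whence $A'p_{k+1}=Ep_k=0$, closing the induction. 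In particular $p_N\in\ker E\cap\ker A'$ with $p_N\neq 0$, as required. The main obstacle is spotting this chain-collapse identity: it is the combined use of $E\geq 0$ and the skew-symmetry of $A'$ that rules out the higher-order Kronecker chains a generic singular pencil could support, forcing all null vectors to be constant.
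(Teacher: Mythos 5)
Your proof is correct, but it reaches the key structural fact by a genuinely different route than the paper. The paper also reduces, via Proposition \ref{P:inv1}, to regularity of the pencil $z\sbm{I_n&0\\0&M}-\sbm{A&B\\-B^T&-D}$, and isolates your ``structural claim'' as a separate statement (Lemma \ref{L:RegLem}): for $\whatE\geq 0$ and $\whatA$ skew-symmetric, the pair $(\whatE,\whatA)$ is regular if and only if $\kr(\whatA|_{\kr \whatE})=\{0\}$. Its proof of the sufficiency direction is pure block linear algebra: the space is decomposed into four subspaces adapted to $\im\,\whatE$, $\kr\whatE$ and the action of $\whatA$, and two Schur complement reductions bring the pencil to the form $zE_{22}-(\text{skew-symmetric})$ with $E_{22}$ positive definite, which is invertible for every real $z\neq 0$. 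You instead take a nonzero polynomial vector $p(z)$ annihilated by the (assumed singular) pencil and collapse the resulting chain $\whatA p_{k+1}=\whatE p_k$ via the identity $\langle \whatE p_k,p_k\rangle=\langle \whatA p_{k+1},p_k\rangle=-\langle p_{k+1},\whatA p_k\rangle=0$ combined with $\whatE\geq 0$, concluding that a singular pencil of this type must have a constant kernel vector. Both arguments are valid; yours is shorter and makes transparent exactly why semidefiniteness plus skew-symmetry forbids nontrivial Kronecker chains, at the modest cost of invoking the existence of a polynomial kernel vector for a singular square pencil (clear denominators over $\BR(z)$) and normalizing $N=\deg p$ so that the leading coefficient $p_N$ is nonzero; the paper's proof is entirely elementary matrix manipulation. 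Your ``easy'' direction, substituting $v\in\kr M\cap\kr B\cap\kr D$ directly into the realization of $F$, is equivalent to the paper's one-line observation that $\kr(\whatA|_{\kr\whatE})$ is contained in $\kr(z\whatE-\whatA)$ for all $z$.
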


\begin{proof}[\bf Proof]
In Proposition \ref{P:inv1} we noted that $\det F(z)\not\equiv 0$ precisely when the pair $(\whatE,\whatA)$ with $\whatE=\sbm{I_n&0\\0&M}$ and $\whatA=\sbm{A&B\\-B^T &-D}$ is regular, that is, $\det(z\what{E} - \what{A})\not\equiv 0$. The claim now follows immediately from the following lemma.
\end{proof}

\begin{lemma}\label{L:RegLem}
Let $\whatE\geq 0$ and $\whatA\in\BR^{k\times k}$ skew-symmetric. Then the pair $(\whatE,\whatA)$ is regular if and only if $\kr (\whatA|_{\kr \whatE})=\{0\}$.
\end{lemma}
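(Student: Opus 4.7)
The plan is to handle the two implications separately. The forward direction ($\Rightarrow$) is essentially tautological and I would do it by contrapositive: if there exists a nonzero $v \in \kr\whatE \cap \kr\whatA$, i.e., $v \in \kr(\whatA|_{\kr\whatE})$, then $(z\whatE - \whatA)v = 0$ for every $z \in \BC$, which forces $\det(z\whatE - \whatA) \equiv 0$ and violates regularity.

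For the reverse direction ($\Leftarrow$) I would sidestep an analysis of the full pencil and instead prove the stronger fact that $\det(\whatE - \whatA) \neq 0$, i.e., invertibility at the single point $z = 1$, which already implies that the polynomial $\det(z\whatE - \whatA)$ is not identically zero. Suppose $v \in \BC^k$ satisfies $(\whatE - \whatA)v = 0$; taking the Hermitian inner product with $v$ yields $v^*\whatE v = v^*\whatA v$. The left-hand side is real and nonnegative because $\whatE\geq 0$ is real symmetric. The right-hand side is purely imaginary: since $\whatA^T=-\whatA$ is real, $(v^*\whatA v)^* = v^*\whatA^T v = -v^*\whatA v$. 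These two facts together force $v^*\whatE v = 0$, and since $\whatE\geq 0$ this gives $\whatE^{1/2}v = 0$, hence $\whatE v = 0$. Plugging back in the original equation then yields $\whatA v = 0$.

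Thus $v$ lies in the joint kernel $\kr\whatE \cap \kr\whatA$. The hypothesis $\kr(\whatA|_{\kr\whatE})=\{0\}$ is stated over $\BR$, but because $\whatE$ and $\whatA$ are real matrices, their complex kernels are just the complexifications of their real kernels, so the hypothesis transfers verbatim to $\BC^k$ and forces $v = 0$. Hence $\whatE - \whatA$ is injective and, being square, invertible, so $\det(\whatE-\whatA)\neq 0$.

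The proof is very short and requires no spectral decomposition, Weierstrass form, or Schur complement manipulation. The only mildly delicate step is the passage from real to complex kernels in the last paragraph, but this is routine since all matrices involved are real. I do not foresee any substantive obstacle.
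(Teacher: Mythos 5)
Your proof is correct, and the sufficiency direction takes a genuinely different and more elementary route than the paper's. The paper decomposes $\BR^k$ into four subspaces $\cZ_1\oplus\cZ_2\oplus\cZ_3\oplus\cZ_4$ adapted to $\im\,\whatE$, $\kr\whatE$ and the action of $\whatA$, writes the pencil in block form, and peels off blocks with two Schur-complement reductions until it is left with a small pencil $zE_{22}-(A_{22}+A_{23}A_{33}^{-1}A_{23}^T)$ with $E_{22}$ positive definite and the second term skew-symmetric, at which point it invokes exactly the observation you use globally: a real-symmetric-positive versus skew-symmetric quadratic form argument shows nonsingularity at every nonzero real $z$. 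Your proof applies that observation directly to the full pencil at $z=1$: from $(\whatE-\whatA)v=0$ you get $v^*\whatE v=v^*\whatA v$ with the left side real nonnegative and the right side purely imaginary, forcing $\whatE v=0$ and then $\whatA v=0$, so the hypothesis kills $v$ (and your remark about passing from real to complex kernels is the right thing to say and is indeed routine). This is shorter, avoids the bookkeeping of the four-fold decomposition entirely, and in fact yields the slightly stronger statement that $\det(z\whatE-\whatA)\neq 0$ for every nonzero real $z$; the paper's reduction produces a block normal form that could in principle carry extra structural information, but nothing in the rest of the paper depends on it, so your argument is a clean replacement. The necessity direction is the same one-line containment argument in both.
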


\begin{proof}[\bf Proof]
For the necessity, just note that $\kr (\whatA|_{\kr \whatE})$ is contained in $\kr (z\whatE-\whatA)$ for all $z\in\BC$. So it remains to prove sufficiency. Assume $\kr (\whatA|_{\kr \whatE})=\{0\}$. Decompose $\BR^k=\im\, \whatE \oplus \kr \whatE$. Further decompose $\kr \whatE=\cZ_3\oplus\cZ_4$ with $\cZ_3=\im(P_{\kr \whatE} \whatA |_{\kr \whatE})$ and $\cZ_4=\kr(P_{\kr \whatE} \whatA |_{\kr \whatE})$ and $\im\, \whatE=\cZ_1\oplus\cZ_2$ with $\cZ_1=\whatA (\cZ_4)$ and $\cZ_2=\im\, \whatE \ominus \cZ_1$. Note that $\whatA$ maps $\cZ_4$ into $\im\, \whatE$ by definition of $\cZ_4$. Using that $\whatE$ is positive semidefinite and $\whatA$ skew-symmetric, we now obtain that with respect to the decomposition $\BR^k=\cZ_1\oplus\cZ_2\oplus\cZ_3\oplus\cZ_4$, the matrices $\whatE$ and $\whatA$ have the following form
\[
\whatA=\mat{cccc}{A_{11}&A_{12}&A_{13}&A_{14}\\ -A_{12}^T&A_{22}&A_{23}&0\\ -A_{13}^T&-A_{23}^T&A_{33}&0\\ -A_{14}^T &0&0&0},\quad
\whatE=\mat{cccc}{E_{11}&E_{12}&0&0\\E_{12}^T&E_{22}&0&0\\0&0&0&0\\0&0&0&0},
\]
with $\sbm{E_{11}&E_{12}\\E_{12}^T&E_{22}}$ positive definite, and hence $E_{11}$ and $E_{22}$ positive definite,  $A_{33}$ invertible and $A_{14}$ full row-rank. The assumption $\kr (\whatA|_{\kr \whatE})=\{0\}$ is equivalent to $\kr A_{14}=\{0\}$, hence to $A_{14}$ invertible. Now note that
\[
z\what{E}-\what{A}=
\mat{cccc}{zE_{11}-A_{11}&zE_{12}-A_{12}&-A_{13}&-A_{14}\\ zE_{12}^T+ A_{12}^T&zE_{22}-A_{22}&-A_{23}&0\\ A_{13}^T&A_{23}^T&-A_{33}&0\\ A_{14}^T &0&0&0}.
\]
Since $A_{14}$ is invertible, and hence $A_{14}^T$ is invertible, we obtain that $z\whatE-\whatA$ is invertible if and only if
\[
\mat{cc}{zE_{22}-A_{22}&-A_{23}\\A_{23}^T&-A_{33}}
\]
is invertible. Taking the Schur complement with respect to $-A_{33}$ we see that invertibility of this $2 \times 2$ block matrix is equivalent to invertibility of the Schur complement
\[
zE_{22}-A_{22}- (-A_{23})(-A_{33})^{-1}A_{23}^T=zE_{22}-(A_{22}+A_{23}A_{33}^{-1}A_{23}^T).
\]
Note that $A_{22}$ and $A_{33}$ are skew-symmetric and $E_{22}$ is positive definite. Therefore, $A_{22}+A_{23}A_{33}^{-1}A_{23}^T$ is skew-symmetric, and for any $0\neq z\in\BR$ we have $\det(zE_{22}-(A_{22}+A_{23}A_{33}^{-1}A_{23}^T))\neq 0$. Since either $\det(zE_{22}-(A_{22}+A_{23}A_{33}^{-1}A_{23}^T))\equiv 0$ or there are only finitely many roots, we see that $\det(zE_{22}-(A_{22}+A_{23}A_{33}^{-1}A_{23}^T))\not\equiv 0$. Consequently, we have $\det(z\whatE-\whatA)\not\equiv 0$, hence the pair $(\whatE,\whatA)$ is regular.
\end{proof}

The realization \eqref{Inverse1} will in general not be minimal, and hence some of the poles of the resolvent may not be poles of $F^{-1}$, or the multiplicities may be inflated. To obtain a minimal realization, we decompose the matrices $M$, $D$ and $B$ with respect to the decomposition of $\BR^m$ given by
\begin{equation}\label{Rmdec}
\begin{aligned}
&\hspace*{2cm}\BR^m=\cX_1\oplus\cX_2\oplus\cX_3,\mbox{ with }\\
&\cX_1=\kr M^\perp,\ \cX_2= \kr (P_{\kr M} D|_{\kr M})^\perp,\ \cX_3=\kr (P_{\kr M} D|_{\kr M}),
\end{aligned}
\end{equation}
which yields decompositions of the form
\begin{equation}\label{BDMdec}
B\!=\!\mat{ccc}{B_1&B_2&B_3},\
D\!=\!\mat{ccc}{D_{11}&D_{12}&D_{13}\\ -D_{12}^T &D_{22}&0\\ -D_{13}^T&0&0},\
M\!=\!\mat{ccc}{M_1&0&0\\ 0&0&0\\ 0&0&0},
\end{equation}
with $M_1$ and $D_{22}$ invertible. In particular, $M_1$ is positive definite and $D_{22}$ is invertible and real, skew-symmetric, so that $\cX_2$ must have even dimension. We set
\[
m_1=\dim \cX_1,\ \
m_2= \dim\cX_2,\ \
m_3= \dim \cX_3,\ \ \mbox{so that}\ \ m=m_1+m_2+m_3, \mbox{ $m_2$ even}.
\]

As an intermediate step towards our main result, we present a minimal descriptor realization for $F^{-1}$ which is not in Weierstrass form yet. For this purpose, consider linear maps $K_1$ and $\Xi$ so that
\begin{equation}\label{K1Xi}
K_1:\cX_1\to\BR^{m_1},\quad K_1^TK_1=M_1,\qquad \Xi:\cX_3\to\BR^{m_3},\quad \Xi^T \Xi=I_{\cX_3}.
\end{equation}
Note that $K_1$ is invertible and $\Xi$ orthogonal. Further, define
\begin{equation}\label{tilAtilB}
\begin{aligned}
\wtilA&=\mat{cc}{A-B_2 D_{22}^{-1}B_2^T & (B_1+B_2D_{22}^{-1}D_{12}^T)K_1^{-1}\\
K_1^{-T}(-B_1^T+ D_{12}D_{22}^{-1}B_2^T) & - K_1^{-T}(D_{11}+ D_{12}D_{22}^{-1}D_{12}^T)K_1^{-1}},\\
\wtilB&=\mat{c}{B_3\Xi^T\\ -K_1^{-T} D_{13}\Xi^T}.
\end{aligned}
\end{equation}
In terms of the decomposition \eqref{Rmdec}--\eqref{BDMdec}, the condition for $\det F(z)\not\equiv 0$ of Lemma \ref{L:InvCon} translates to $\kr \sbm{B_3\\ D_{13}}=\{0\}$, or, equivalently, $\kr \wtilB=\{0\}$.

\begin{proposition}\label{P:inv2}
Let $F\in\PRO_m$ be given by \eqref{RealFormPRO2}--\eqref{RealFormPROcon2} and decompose $B$, $D$, $M$ as in \eqref{BDMdec}. Define $\wtilA$ and $\wtilB$ as in \eqref{tilAtilB} with $K_1$ and $\Xi$ as in \eqref{K1Xi}. Assume $\kr \wtilB=\{0\}$, so that $\det F(z)\not\equiv 0$.  Then
\begin{equation}\label{FinvForm}
F(z)^{-1}= \what{D}_\tu{inv} + \what{B}^{T}_\tu{inv} (z \what{E}_\tu{inv} - \what{A}_\tu{inv})^{-1} \what{B}_\tu{inv},
\end{equation}
where
\begin{equation}\label{whatABCD}
\begin{aligned}
 &\qquad\what{E}_\tu{inv}  = \mat{cc}{I_{n+ m_1}&0\\ 0&0},\quad  \what{A}_\tu{inv}  = \mat{cc}{\wtilA & \wtilB\\ -\wtilB^T &0}, \\
 & \what{B}_\tu{inv}  = \mat{ccc}{0 & B_2D_{22}^{-1} & 0 \\ K_1^{-T} &  -K_1^{-T} D_{12} D_{22}^{-1} & 0 \\ 0 & 0 & \Xi}, \quad
  \what{D}_\tu{inv}  = \mat{ccc}{ 0 & 0 & 0 \\ 0 & D_{22}^{-1} & 0 \\ 0 & 0 & 0 },
\end{aligned}
\end{equation}
and the descriptor realization \eqref{FinvForm} of $F^{-1}$ is minimal.
\end{proposition}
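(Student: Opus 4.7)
The approach has two stages: derive \eqref{FinvForm}--\eqref{whatABCD} by successive reductions of the pencil in Proposition \ref{P:inv1}, and then verify minimality using the Hautus criteria (i)--(v) of Section \ref{S:TransferPre}.

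Start from $F^{-1}(z)=\sbm{0 & I_m}(z\what{E}-\what{A})^{-1}\sbm{0\\ I_m}$ with $\what{E}=\diag(I_n,M)$ and $\what{A}=\sbm{A&B\\ -B^T&-D}$. Using the decomposition \eqref{Rmdec}--\eqref{BDMdec}, expand the pencil into a $4\times 4$ block matrix on $\BR^n\oplus\cX_1\oplus\cX_2\oplus\cX_3$. Its $\cX_2$-$\cX_2$ entry is the invertible skew-symmetric matrix $D_{22}$ and carries no $z$-dependence, while its $\cX_3$-$\cX_3$ entry is zero. Perform a Schur complement with respect to the $D_{22}$ block; because that block is $z$-free, the elimination is purely algebraic and corresponds to solving out the algebraic state in $\cX_2$. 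This step produces the direct-feedthrough contribution $D_{22}^{-1}$ at the $\cX_2$-$\cX_2$ entry of $\what{D}_\tu{inv}$ and modifies the surviving $3\times 3$ pencil by precisely the combinations $B_2 D_{22}^{-1}B_2^T$, $B_2 D_{22}^{-1}D_{12}^T$, and $D_{12}D_{22}^{-1}D_{12}^T$ that appear in $\wtilA$. Next, apply the state-space similarity $\diag(I_n,K_1^{-1},I_{m_3})$: since $M_1=K_1^T K_1$, this normalizes the $\cX_1$-mass block to $I_{m_1}$ and conjugates the remaining $\cX_1$-entries by $K_1^{-T}$ and $K_1^{-1}$, yielding exactly $\wtilA$. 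Finally, representing $\cX_3$ in the orthonormal basis furnished by $\Xi$ rewrites the $\cX_3$-selectors as $\Xi$ and $\Xi^T$, producing $\wtilB$ and the last column of $\what{B}_\tu{inv}$. The main obstacle in this stage is the bookkeeping of signs and transposes, since $\what{A}$ interleaves the symmetric block $M$ with the skew-symmetric blocks $A$, $D$, and $D_{22}$.

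For minimality, condition (v) is immediate: $\kr\what{E}_\tu{inv}=\{0\}\oplus\BR^{m_3}$ and $\what{A}_\tu{inv}$ sends this subspace into $\im\wtilB\oplus\{0\}\subset\im\what{E}_\tu{inv}$, because the $\cX_3$-$\cX_3$ block of $\what{A}_\tu{inv}$ vanishes. Conditions (ii) and (iv) follow because the block $I_{n+m_1}$ in $\what{E}_\tu{inv}$ and the orthogonal block $\Xi$ in $\what{B}_\tu{inv}$ jointly span all $n+m_1+m_3$ rows. For the Hautus test (i), at each $z\in\BC$ one undoes the $K_1$-similarity and reinterprets the Schur complement to reduce the rank condition to one about the original pencil $z\what{E}-\what{A}$ together with the $\sbm{0\\ I_m}$ selector; controllability of $(A,B)$, invertibility of $D_{22}$, and the assumption $\kr\wtilB=\{0\}$ (equivalent by Lemma \ref{L:InvCon} to $\det F(z)\not\equiv 0$) together yield the required full row rank. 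Condition (iii) is obtained symmetrically: the Hamiltonian-like structure $\what{A}_\tu{inv}=\sbm{\wtilA & \wtilB\\ -\wtilB^T & 0}$ and the paired appearance of $\what{B}_\tu{inv}$ and $\what{B}_\tu{inv}^T$ in the transfer function mean observability is established by essentially the transpose of the controllability argument. The hardest step is (i): one must verify that controllability of $(A,B)$ passes intact through the Schur complement elimination and that no rank drop is introduced at the finite eigenvalues produced by the reduction.
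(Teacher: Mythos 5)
Your proposal is correct and follows essentially the same route as the paper: the paper packages your three successive reductions (Schur complement on the $z$-free $D_{22}$ block, the $K_1$-normalization of $M_1$, and the $\Xi$-basis change on $\cX_3$) into a single congruence $T_1(\cdot)T_1^T$ of the pencil from Proposition \ref{P:inv1}, and it likewise verifies minimality via the five Hautus conditions of \cite{FJ04}, getting (iii)--(iv) from (i)--(ii) by the skew-symmetry of $\what{A}_\tu{inv}$ and deriving (i) from controllability of $(A,B)$ by rank-counting on the augmented pencil $\sbm{zI_n-A & -B & 0\\ B^T & zM+D & I_m}$. The only cosmetic difference is that the paper's condition (i) needs only controllability and the invertibility of $D_{22}$, not the hypothesis $\kr\wtilB=\{0\}$, which enters solely to guarantee regularity of the pencil.
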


\begin{proof}[\bf Proof] Set
\begin{align*}
T_1&=\mat{cccc}{I_n&0&B_2 D_{22}^{-1}&0\\0&K_1^{-T}&-K_1^{-T}D_{12}D_{22}^{-1}&0\\0&0&0&\Xi\\0&0&I_{\cX_2}&0},
\end{align*}
and note that $T_1$ is invertible.
A straightforward computation shows that
\[
T_1\mat{cc}{zI_n-A & -B\\ B^T& zM+ D} T_1^T=\mat{cc}{z \whatE_\tu{inv}-\whatA_\tu{inv} & 0\\ 0 & D_{22}}.
\]
Since $D_{22}$ is invertible, it follows that $z \whatE_\tu{inv}-\whatA_\tu{inv}$ is invertible if and only if $\sbm{zI-A & -B\\ B^T& zM+ D}$ is invertible. Applying this transformation to the formula for $F^{-1}$ in \eqref{Inverse1} we obtain that
\begin{align*}
F(z)^{-1}
& =\mat{cc}{0 & I_m}\mat{cc}{zI_n-A & -B\\ B^T& zM+ D}^{-1}\mat{c}{0\\ I_m}\\
& =\mat{cc}{0 & I_m}T_1^T \mat{cc}{z \whatE_\tu{inv}-\whatA_\tu{inv} & 0\\ 0 & D_{22}}^{-1} T_1\mat{c}{0\\ I_m}\\
&= \mat{ccc|c}{0&K_1^{-1}&0&0\\-D_{22}^{-1}B_2^T&D_{22}^{-1}D_{12}^TK_1^{-1}&0&I_{\cX_2}\\0&0&\Xi^T&0}\times\\
&\qquad\qquad \times \mat{cc}{(z \whatE_\tu{inv}-\whatA_\tu{inv})^{-1} & 0\\ 0 & D_{22}^{-1}}
\mat{ccc}{0&B_2 D_{22}^{-1}&0\\K_1^{-T}&-K_1^{-T}D_{12}D_{22}^{-1}&0\\0&0&\Xi\\\hline 0&I_{\cX_2}&0}\\
&= \mat{ccc}{0&0&0\\0&D_{22}^{-1}&0\\0&0&0}+
\mat{cccc}{0&K_1^{-1}&0\\-D_{22}^{-1}B_2^T&D_{22}^{-1}D_{12}^TK_1^{-1}&0\\0&0&\Xi^T}\times\\
&\qquad\qquad\qquad \times(z\whatE_\tu{inv}-\whatA_\tu{inv})^{-1}
\mat{ccc}{0&B_2 D_{22}^{-1}&0\\K_1^{-T}&-K_1^{-T}D_{12}D_{22}^{-1}&0\\0&0&\Xi}\\
&=\what{D}_\tu{inv} + \what{B}^{T}_\tu{inv} (z \what{E}_\tu{inv} - \what{A}_\tu{inv})^{-1} \what{B}_\tu{inv}.
\end{align*}
Hence, we established \eqref{FinvForm}. It remains to prove that this descriptor realization is minimal. By Theorem 6.2 from \cite{FJ04}, see also Section \ref{S:TransferPre}, the descriptor realization \eqref{FinvForm} is minimal if and only if the following five conditions are met:
\begin{itemize}

\item[(i)] $\rank \mat{cc}{z \whatE_\tu{inv} - \whatA_\tu{inv} & \whatB_\tu{inv}}=n+m_1+m_3$ for all $z\in\BC$;

\item[(ii)] $\rank \mat{cc}{\whatE_\tu{inv} & \whatB_\tu{inv}}=n+m_1+m_3$;

\item[(iii)] $\rank \mat{cc}{z \whatE_\tu{inv}^T - \whatA_\tu{inv}^T & \whatB_\tu{inv}}=n+m_1+m_3$ for all $z\in\BC$;

\item[(iv)] $\rank \mat{cc}{\whatE_\tu{inv}^T & \whatB_\tu{inv}}=n+m_1+m_3$;

\item[(v)]$\whatA_\tu{inv} (\kr \whatE_\tu{inv})\subset \im\, \whatE_\tu{inv}$.
\end{itemize}
Since $\whatA_\tu{inv}^T =- \whatA_\tu{inv}$ and $\whatE_\tu{inv}^T=\whatE_\tu{inv}$, conditions (iii) and (iv) follow from (i) and (ii), hence it suffices to verify (i), (ii) and (v). From the formulas of $\whatE_\tu{inv}$ and $\whatB_\tu{inv}$ it is clear that $\rank \sbm{\whatE_\tu{inv} & \whatB_\tu{inv}}=n+m_1+\rank \Xi=n+m_1+m_3$, since $\Xi$ is a orthogonal map, hence (ii) holds. Also, $\whatA_\tu{inv} (\kr \whatE_\tu{inv})=\im \sbm{\wtilB\\ 0}\subset \BR^{n+m_1}\oplus\{0\}=\im\, \whatE_\tu{inv}$. Thus (v) is also satisfied, and it remains to prove (i).  First note that
\begin{align*}
  \rank \mat{ccc}{zI_n-A &-B&0\\ B^T&zM+D&I_m} &= \rank \mat{cc}{z I_n-A & -B}+m=n+m,
\end{align*}
since $(A,B)$ is assumed to be a controllable pair. Using the invertible matrix $T_1$ defined above we note that
\[
T_1 \mat{ccc}{zI_n-A &-B&0\\ B^T&zM+D&I_m} \mat{cc}{T_1^T&0\\0& I_m}
= \mat{ccc}{z\whatE_\tu{inv}-\whatA_\tu{inv}&0&\whatB_\tu{inv}\\0&D_{22}&R}
\]
with $R=\mat{ccc}{0&I_{\cX_2}&0}$. Consequently, since $D_{22}$ is invertible, we have
\begin{align*}
n+m_1+m_2+m_3&=n+m=\rank\mat{ccc}{z\whatE_\tu{inv}-\whatA_\tu{inv}&0&\whatB_\tu{inv}\\0&D_{22}&R}\\
&=m_2+ \rank\mat{cc}{z\whatE_\tu{inv}-\whatA_\tu{inv}&\whatB_\tu{inv}}.
\end{align*}
Hence,  $\rank \sbm{z\whatE_\tu{inv}-\whatA_\tu{inv}&\whatB_\tu{inv}}=m+m_1+m_3$, as desired.
\end{proof}

We are now ready to present the minimal Weierstrass realization for $F^{-1}$.

\begin{theorem}\label{T:inv1}
Let $F\in\PRO_m$ be given by \eqref{RealFormPRO2}-\eqref{RealFormPROcon2} and decompose $B$, $D$, $M$ with respect to the decomposition \eqref{Rmdec} of $\BR^m$ as in \eqref{BDMdec}. Define $\wtilA$ and $\wtilB$ as in \eqref{tilAtilB}, with $K_1$ and $\Xi$ as in \eqref{K1Xi}, and assume $\kr \wtilB =\{0\}$ so that $\det F(z)\not\equiv 0$. Set $k=n+m_1-m_3$ and let $\Ga\in\BR^{(n+m_1)\times k}$ be an isometry with $\im\, \Ga\perp \im\, \wtilB$. Then a minimal Weierstrass descriptor realization of the inverse of $F$ is given by
\begin{equation}\label{FinvWeierstrass}
F(z)^{-1}=D_\tu{inv}^\circ +C_\tu{inv}^{\circ T} (zE_\tu{inv}^\circ -A_\tu{inv}^\circ)^{-1} B_\tu{inv}^\circ
\end{equation}
with
\begin{align}
&E_\tu{inv}^\circ\!=\!\mat{ccc}{\! I_k&0&0\!\\\! 0&0& I_{m_3}\! \\\! 0&0&0\!},\   A_\tu{inv}^\circ\!=\!\mat{ccc}{\! A_\tu{inv}&0&0\! \\\! 0&I_{m_3}&0\! \\\! 0&0&I_{m_3}\!},
\  B_\tu{inv}^\circ\!=\!\mat{c}{\! B_\tu{inv}\!\\\! 0\! \\\! -K_\tu{inv}\!}, \label{FinvWeierMats1}\\
&C_\tu{inv}^\circ\!=\!\mat{c}{\!\! B_\tu{inv}\!\! \\\!\! K_\tu{inv}\!\! \\\!\! 0\!\!},\
D_\tu{inv}^\circ \!=\!\mat{ccc}{
\! 0 \!&\! 0 \!&\!  -M_1^{-1}D_{13} \Phi_{33}^{-1}\! \\
\! 0  \!&\!  D_{22}^{-1}  \!&\!  -D_{22}^{-1}\Phi_{23} \Phi_{33}^{-1}\! \\
\! \Phi_{33}^{-1} D_{13}^T M_{1}^{-1}  \!&\!  -\Phi_{33}^{-1}\Phi_{23}^T D_{22}^{-1}   \!&\!
-\Phi_{33}^{-1} \Xi^T \wtilB^T \wtilA \wtilB \Xi \Phi_{33}^{-1}\!},\notag
\end{align}
where we define
\begin{equation}\label{FinvWeierMats2}
\begin{aligned}
& A_\tu{inv}=\Ga^T \wtilA \Ga,\qquad K_\tu{inv}=\mat{ccc}{0&0&-\Xi\Phi_{33}^{-1/2    }},\\
& B_\tu{inv}=\Ga^T
\sbm{0 &B_2 D_{22}^{-1}& (AB_3 -B_1M_1^{-1}D_{13} -B_2 D_{22}^{-1}\Phi_{23})\Phi_{33}^{-1}\\
K_1^{-T} & - K_1^{-T} D_{12}D_{22}^{-1} & - K_1^{-T}(B_1^TB_3 -D_{11}M_1^{-1}D_{13} - D_{12}D_{22}^{-1}\Phi_{23})\Phi_{33}^{-1}},\\
& \Phi_{33}=B_3^T B_3 +D_{13}^T M_1^{-1}D_{13},\quad
\Phi_{23}=B_2^T B_3 +D_{12}^T M_1^{-1}D_{13},
\end{aligned}
\end{equation}
and where
\begin{align*}
\Xi^T \wtilB^T \wtilA \wtilB \Xi
&=B_3^TAB_3 -B_3^T B_1 M_1^{-1}D_{13} +D_{13}^T M_1^{-1}B_1^T B_3+\\
&\qquad\qquad\qquad-D_{13}^TM_1^{-1}D_{11}M_1^{-1}D_{13} -\Phi_{23}D_{22}^{-1}\Phi_{23}.
\end{align*}
\end{theorem}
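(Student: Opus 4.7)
The plan is to start from the minimal descriptor realization of $F^{-1}$ produced in Proposition \ref{P:inv2} and apply an invertible state-space transformation of the pencil $(z\whatE_\tu{inv}-\whatA_\tu{inv})$ that brings it into the Weierstrass form \eqref{FinvWeierstrass}. Because invertible left-right transformations of descriptor realizations preserve minimality, the minimality claim will follow for free from Proposition \ref{P:inv2} once the matrix identifications are verified, and no separate Hautus-type check is needed.

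First, I would use the hypothesis $\kr\wtilB=\{0\}$ together with the choice of $\Ga$ to split $\BR^{n+m_1}=\im\,\Ga\oplus\im\,\wtilB$ as an orthogonal direct sum; taking any isometry $V:\BR^{m_3}\to\BR^{n+m_1}$ with $\im V=\im\wtilB$ (for instance $V=\wtilB(\wtilB^T\wtilB)^{-1/2}$), the orthogonal matrix $P=[\Ga,V]$ together with $I_{m_3}$ on the last coordinates yields a block transformation $\wtilP=\diag(P,I_{m_3})$ that conjugates the pencil into
\[
\mat{ccc}{zI_k-A_\tu{inv}& -X & 0\\ X^T& zI_{m_3}-Y & -W\\ 0& W^T & 0},
\]
with $A_\tu{inv}=\Ga^T\wtilA\Ga$, $X=\Ga^T\wtilA V$, $Y=V^T\wtilA V$ and $W=V^T\wtilB$, where $W$ is invertible since $V$ is an isometry onto $\im\wtilB$ and $\wtilB^T\wtilB$ is invertible. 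Second, I would use the invertibility of $W$ to clear the couplings $X$ and $X^T$ by adding suitable multiples of the last block-row and block-column, and then reduce the remaining $2m_3\times 2m_3$ pencil to the standard infinite pencil $z\sbm{0&I_{m_3}\\0&0}-I_{2m_3}$ via an explicit Schur-complement computation. Composing these invertible operations with $\wtilP$ yields the Weierstrass pair $(E_\tu{inv}^\circ,A_\tu{inv}^\circ)$ of \eqref{FinvWeierMats1}, and skew-symmetry of $A_\tu{inv}=\Ga^T\wtilA\Ga$ follows from that of $\wtilA$, which can be verified directly from \eqref{tilAtilB} using $A^T=-A$, $D^T=-D$ and $D_{22}^{-T}=-D_{22}^{-1}$.

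The main obstacle is to push these transformations through the input, output and feedthrough matrices of Proposition \ref{P:inv2} and to recognize the results as the explicit expressions in \eqref{FinvWeierMats1}--\eqref{FinvWeierMats2}. The first two block-columns of $B_\tu{inv}$ arise simply as $\Ga^T$ applied to the top $(n+m_1)$ rows of $\whatB_\tu{inv}$, but the third block-column is created by the coupling-killing step, which effectively folds the matrix $\wtilA\wtilB$ into the input. A direct computation from \eqref{tilAtilB}--\eqref{BDMdec} gives
\[
(\wtilA\wtilB)\Xi\Phi_{33}^{-1}=\mat{c}{(AB_3-B_1M_1^{-1}D_{13}-B_2D_{22}^{-1}\Phi_{23})\Phi_{33}^{-1}\\ -K_1^{-T}(B_1^TB_3-D_{11}M_1^{-1}D_{13}-D_{12}D_{22}^{-1}\Phi_{23})\Phi_{33}^{-1}},
\]
which matches exactly the third block-column of the matrix of which $B_\tu{inv}$ is the $\Ga^T$-projection. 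Analogous bookkeeping for the feedthrough, together with the contribution $\wtilB^T\wtilA\wtilB$ arising from the reduction of the polynomial block, produces $D_\tu{inv}^\circ$, including its $(3,3)$-entry $-\Phi_{33}^{-1}\Xi^T\wtilB^T\wtilA\wtilB\Xi\Phi_{33}^{-1}$. Finally $K_\tu{inv}^TK_\tu{inv}=\sbm{0&0&0\\0&0&0\\0&0&\Phi_{33}^{-1}}$ is supported only in the $(3,3)$-block, so by Theorem \ref{T:Weierstrass} the resulting realization indeed has the $\PRO_m$-Weierstrass structure \eqref{FinvWeierMats1}, consistent with $F^{-1}\in\PRO_m$.
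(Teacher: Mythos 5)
Your proposal is correct and follows essentially the same route as the paper: both start from the minimal descriptor realization of Proposition \ref{P:inv2}, split $\BR^{n+m_1}$ orthogonally using $\Ga$ and the isometry $\wtilB(\wtilB^T\wtilB)^{-1/2}$, and apply an invertible pencil transformation (your coupling-clearing row/column operations are exactly the paper's explicit $L_1$, $L_2$) so that $\wtilA\wtilB$ is folded into the third block-column of $B_\tu{inv}$ and the $(3,3)$-entry of $D_\tu{inv}^\circ$, with minimality inherited from Proposition \ref{P:inv2}. The only difference is presentational: the paper writes out $L_1$, $L_2$ and the feedthrough identity $\what{D}_\tu{inv}+K_\tu{inv}^TR-R^TK_\tu{inv}=D_\tu{inv}^\circ$ explicitly, where you summarize the latter as bookkeeping.
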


\begin{proof}[\bf Proof]
Consider the realization of $F(z)^{-1}$ in Proposition \ref{P:inv2}. Define $\Ga$ and $\Xi$, as well as $E_\tu{inv}^\circ, A_\tu{inv}^\circ,\ldots,D_\tu{inv}^\circ$ and $\wtilA$ and $\wtilB$ as in the theorem. Let $\wtilB^+:= (\wtilB^T\wtilB)^{-1}\wtilB^T$ be the Moore-Penrose left inverse of $\wtilB$. Set $\Up:=\wtilB (\wtilB^T\wtilB)^{-1/2}$. By definition of $\Ga$ we have $\wtilB^+ \Ga=0$ and $\Up^T \Ga=0$. Moreover, $\Up$ is an isometry and $\mat{cc}{\Ga & \Up}$ is unitary.  Now define the invertible matrices
\begin{align*}
L_1 &= \mat{ccc}{I_k&0& \Ga^T \wtilA (\wtilB^+)^T \\ 0&I_{m_3}& \half \Up^T \wtilA (\wtilB^+)^T \\0 &0& -(\wtilB^T\wtilB)^{-1/2}} \mat{cc}{\Ga^T &0\\ \Up^T &0\\0 & I_{m_3}}
=\mat{cc}{\Ga^T & \Ga^T \wtilA (\wtilB^+)^T\\\Up^T & \half \Up^T \wtilA (\wtilB^+)^T\\0 & -(\wtilB^T\wtilB)^{-1/2}},\\
L_2 & =\mat{ccc}{\Ga& \Up &0\\0&0& I_{m_3}}
\mat{ccc}{I_k&0&0\\0&0&I_{m_3}\\ - \wtilB^+ \wtilA \Ga& (\wtilB^T\wtilB)^{-1/2}   &-\half \wtilB^+ \wtilA \Up}\\
&=\mat{ccc}{\Ga&0&\Up\\- \wtilB^+ \wtilA \Ga& (\wtilB^T\wtilB)^{-1/2} &-\half \wtilB^+ \wtilA \Up}.
\end{align*}
A direct computation, using $\wtilB^T\Ga=0$ and $\Up^T \wtilB\wtilB^+=\Up^T P_{\im \wtilB}=\Up^T$, shows that the matrices $\whatA_\tu{inv}$ and $\whatE_\tu{inv}$ given by \eqref{whatABCD} satisfy
\[
L_1 \whatA_\tu{inv} L_2  = A^\circ_\tu{inv}
\ands
L_1 \whatE_\tu{inv} L_2 = E^\circ_\tu{inv}.
\]
Hence $(z\what{E}_\tu{inv}-\wtilA_\tu{inv})^{-1}= L_2(z E_\tu{inv}^\circ - A_\tu{inv}^\circ)^{-1}L_1$. Note that $\wtilB^T\wtilB=\Xi \Phi_{33}\Xi^T$, so that $(\wtilB^T\wtilB)^\half=\Xi \Phi_{33}^\half\Xi^T$, since $\Xi$ is unitary. Furthermore, one can compute that
\[
\wtilA \wtilB=\mat{c}{AB_3 -B_1M_1^{-1}D_{13} -B_2 D_{22}^{-1}\Phi_{23}\\- K_1^{-T}(B_1^TB_3 -D_{11}M_1^{-1}D_{13} - D_{12}D_{22}^{-1}\Phi_{23})}\Xi^T.
\]
Using these identities it follows that
\[
L_1 \what{B}_\tu{inv} = \mat{c}{B_\tu{inv}\\ R \\ K_\tu{inv}} \ands
\what{B}_\tu{inv}^T L_2 = \mat{ccc} {B_\tu{inv}^T& -K_\tu{inv}^T & R^T}
\]
where
\[
R=\Xi \Phi_{33}^{-1/2}\mat{ccc}{-D_{13}^T M_1^{-1}&\Phi_{23}^T D_{22}^{-1}& \half \Xi^T \wtilB^T \wtilA \wtilB\Xi \Phi_{33}^{-1}}
\]
and a further computation shows that
\begin{align*}
\Xi^T \wtilB^T \wtilA \wtilB\Xi &=B_3^TAB_3 -B_3^T B_1 M_1^{-1}D_{13} +D_{13}^T M_1^{-1}B_1^T B_3\\
&\qquad\qquad\qquad -D_{13}^TM_1^{-1}D_{11}M_1^{-1}D_{13} -\Phi_{23}^TD_{22}^{-1}\Phi_{23}.
\end{align*}
Therefore, we have
\begin{align*}
F(z)^{-1}&=  \what{D}_\tu{inv} + \what{B}^{T}_\tu{inv} (z \what{E}_\tu{inv} - \what{A}_\tu{inv})^{-1} \what{B}_\tu{inv}\\
&=  \what{D}_\tu{inv} + \what{B}^{T}_\tu{inv}L_2 (z E^\circ_\tu{inv} - A^\circ_\tu{inv})^{-1} L_1\what{B}_\tu{inv}\\
&=\what{D}_\tu{inv}+ \mat{ccc} {B_\tu{inv}^T& -K_\tu{inv}^T & R^T} (z E_\tu{inv}^\circ- A_\tu{inv}^\circ)^{-1} \mat{c}{B_\tu{inv}\\ R \\ K_\tu{inv}}\\
&= \what{D}_\tu{inv}+  C_\tu{inv}^{\circ T}(z E_\tu{inv}^\circ- A_\tu{inv}^\circ)^{-1}B_\tu{inv}^\circ+\\
& \qquad\qquad + \mat{cc}{0 & R^T} \mat{cc}{-I_{m_3} &  z I_{m_3} \\ 0 & -I_{m_3}  }^{-1} \mat{c}{R\\ K_\tu{inv}} +\\
& \qquad\qquad\qquad + \mat{cc}{- K_\tu{inv}^T& -R^T} \mat{cc}{-I_{m_3} &  z I_{m_3} \\ 0 & -I_{m_3}  }^{-1} \mat{c}{R\\ 0} \\
&= \what{D}_\tu{inv} -R^T K_\tu{inv}+K_\tu{inv}^T R +  C_\tu{inv}^{\circ T}(z E_\tu{inv}^\circ- A_\tu{inv}^\circ)^{-1}B_\tu{inv}^\circ.
\end{align*}
So we arrive at \eqref{FinvWeierstrass} by noting that
\begin{align*}
  \what{D}_\tu{inv} +K_\tu{inv}^T R- R^T K_\tu{inv} & = D_\tu{inv}^\circ.
\end{align*}
Minimality of the realization \eqref{FinvWeierstrass} follows directly from the minimality of \eqref{FinvForm}.
\end{proof}

Note that the descriptor realization for $F^{-1}$ of Theorem \ref{T:inv1} precisely has the form of the realization in Theorem \ref{T:Weierstrass}. Reversing the argument in Section \ref{S:Transfer}, we also obtain a realization of the type in Theorem \ref{T:Transfer}.

\begin{theorem}\label{T:inv2}
Let $F\in\PRO_m$ be given by \eqref{RealFormPRO2}-\eqref{RealFormPROcon2} and decompose $B$, $D$, $M$ with respect to the decomposition \eqref{Rmdec} of $\BR^m$ as in \eqref{BDMdec}. Assume $\det F(z)\not\equiv 0$. Then
\begin{equation}\label{RealFormPROinv}
F(z)^{-1}=z M_\tu{inv} +D_\tu{inv} + B_\tu{inv}^T (z I_n -A_\tu{inv})^{-1}B_\tu{inv},
\end{equation}
where $B_\tu{inv}$ and $A_\tu{inv}$ are as in \eqref{FinvWeierMats2}, $D_\tu{inv}=D^\circ_\tu{inv}$ and $M_\tu{inv}=K^T_\tu{inv} K_\tu{inv}$ with $D^\circ_\tu{inv}$ as in \eqref{FinvWeierMats1} and $K_\tu{inv}$ as in \eqref{FinvWeierMats2}. Moreover, the pair $(A_\tu{inv},B_\tu{inv})$ is controllable.
\end{theorem}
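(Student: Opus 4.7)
The plan is to start from the minimal Weierstrass descriptor realization of $F^{-1}$ provided by Theorem \ref{T:inv1} and reverse the passage used to derive Theorem \ref{T:Weierstrass} from Theorem \ref{T:Transfer}. Indeed, a direct inspection shows that the quintuple $(E^\circ_\tu{inv},A^\circ_\tu{inv},B^\circ_\tu{inv},C^\circ_\tu{inv},D^\circ_\tu{inv})$ given by \eqref{FinvWeierMats1} has exactly the block structure of the generic Weierstrass form \eqref{Weierstrass2}, with $n$ replaced by $k$, $q$ replaced by $m_3$, and $B,K,A,D$ replaced by $B_\tu{inv},K_\tu{inv},A_\tu{inv},D^\circ_\tu{inv}$.

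The key computation is to invert the pencil block by block. The upper-left block gives $(zI_k-A_\tu{inv})^{-1}$, while the lower $2\times2$ block $\sbm{-I_{m_3}&zI_{m_3}\\0&-I_{m_3}}$ inverts to $\sbm{-I_{m_3}&-zI_{m_3}\\0&-I_{m_3}}$. Computing the sandwich $C^{\circ T}_\tu{inv}(zE^\circ_\tu{inv}-A^\circ_\tu{inv})^{-1}B^\circ_\tu{inv}$, the $A_\tu{inv}$-block produces the strictly proper part $B^T_\tu{inv}(zI_k-A_\tu{inv})^{-1}B_\tu{inv}$, while the nilpotent block produces $K^T_\tu{inv}(zK_\tu{inv})=zK^T_\tu{inv}K_\tu{inv}=zM_\tu{inv}$. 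Combining with the feedthrough $D^\circ_\tu{inv}=D_\tu{inv}$ yields \eqref{RealFormPROinv}. This is the same computation that, read in the opposite direction, establishes Theorem \ref{T:Weierstrass} from Theorem \ref{T:Transfer}.

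For controllability of $(A_\tu{inv},B_\tu{inv})$, I would invoke the minimality of the descriptor realization \eqref{FinvWeierstrass} from Theorem \ref{T:inv1}, specifically condition (i) of the Hautus-type minimality test recalled in Section \ref{S:TransferPre}: $\rank[\,zE^\circ_\tu{inv}-A^\circ_\tu{inv}\ \ B^\circ_\tu{inv}\,]=k+2m_3$ for every $z\in\BC$. Writing this matrix out block-wise, its last $2m_3$ rows contain the submatrix $\sbm{-I_{m_3}&zI_{m_3}&0\\0&-I_{m_3}&-K_\tu{inv}}$, which has rank $2m_3$ independently of $z$ and has zero entries in every column paired with $B_\tu{inv}$ in the top block row. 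It follows that the top $k$ rows must satisfy $\rank[\,zI_k-A_\tu{inv}\ \ B_\tu{inv}\,]=k$ for every $z\in\BC$, which is the Hautus criterion for controllability of $(A_\tu{inv},B_\tu{inv})$.

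I anticipate no serious obstacle: the algebraic identity reduces to the elementary computation of inverting a block upper-triangular pencil with a size-two nilpotent block, and controllability is extracted directly from the minimality established in Theorem \ref{T:inv1}. The only delicate point is the bookkeeping in matching \eqref{FinvWeierMats1} against the generic form \eqref{Weierstrass2} and in tracking signs when inverting $\sbm{-I&zI\\0&-I}$, after which the proof is essentially the mirror image of the derivation of \eqref{Weierstrass1}--\eqref{Weierstrass2} from \eqref{RealFormPRO}--\eqref{RealFormPROcon}.
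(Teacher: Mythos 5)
Your proof is correct and is exactly the argument the paper intends: Theorem \ref{T:inv2} is stated with the one-line justification ``reversing the argument in Section \ref{S:Transfer}'', and your block-by-block inversion of the pencil (with the nilpotent block yielding $zK_\tu{inv}^TK_\tu{inv}$) together with extracting controllability from Hautus condition (i) of the minimality in Theorem \ref{T:inv1} is that reversal carried out explicitly. Note only that the identity block in \eqref{RealFormPROinv} should be $I_k$ with $k=n+m_1-m_3$ rather than $I_n$, as your computation correctly shows.
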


\section{Poles and zeros of $\PRO_m$ functions}\label{S:Interlace}

In the scalar case, i.e., $m=1$, the poles and zeros of functions in $\PRO$ interlace on the imaginary axis. This follows easily from the Foster representation \eqref{FosterScalar} for $\PRO$. For $m>1$ the situation is more complicated, yet still a (partial) analogue of the scalar result can be obtained. We shall assume $F\in\PRO_m$ is given by the realization formula of Theorem \ref{T:Transfer} so that $F^{-1}$ admits a realization as in Theorem \ref{T:inv2}. Recall that the zeros of $F$ are defined as the poles of $F^{-1}$, hence, for finite zeros, as the eigenvalues of $A_\tu{inv}$ with multiplicities equal to the dimensions of the corresponding eigen spaces. Hence, for finite poles and zeros one has to analyse the spectrum of $A_\tu{inv}$, in relation to the spectrum of $A$. At $\infty$ the situation is reasonably straightforward, the pole-multiplicity of $F$ is given by $\rank M$ while the zero-multiplicity of $F$ is equal to $\rank M_\tu{inv}=\rank \Phi_{33}=\rank \wtilB=m_3$.
There are three steps from $A$ to $A_\tu{inv}$ that influence the eigenvalues:
\begin{itemize}
\item[(i)] The perturbation from $A$ to $\what{A}:=A-B_2D_{22}^{-1}B_2^T$ via a perturbation of at most rank $m_2$;

\item[(ii)] The extension of $\what{A}$ to $\widetilde{A}=\sbm{\what{A} & \star \\ \star & \star}\in\BR^{(n+m_1)\times (n+ m_1)}$ in \eqref{tilAtilB};

  \item[(iii)] The compression from $\widetilde{A}$ to $A_\tu{inv}\in\BR^{(n+m_1-m_3)\times (n+ m_1-m_3)}$ in \eqref{FinvWeierMats2}.

\end{itemize}
In general, all three steps can occur. However, for $m=1$, step (i) cannot occur, since $m_2$ must be even, but both steps (ii) and (iii) can occur separately, but not the combination of the two, hence there are only two cases to analyse. For $m=2$ the situation is already more complicated, step (i) can occur, but not together with steps (ii) and (iii), however steps (ii) and (iii) can happen separately, but also together, leading to four cases. In \cite{tHN20} we included an analysis of the various cases that occur for $m=1$ and $m=2$.

Here we present a partial analogue of the results in \cite{tHN20} for the general case. This requires some variational principles for eigenvalues of Hermitian matrices, which can be found in Sections 4.2 and 4.3 of \cite{HJ85}. For the readers convenience we include the results here. Given a Hermitian matrix $H\in\BC^{k \times k}$ we order the eigenvalues in increasing order, $\la_1(H)\leq \la_2(H)\leq \cdots \leq\la_k(H)$, multiplicities taken into account. For simplicity of the statement of our results, we also define $\la_j(H)=-\infty$ for $j<1$ and $\la_j(H)=\infty$ for $j>k$.

\begin{theorem}[Weyl's Inequality, Theorem 4.3.7 in \cite{HJ85}]\label{T:Weyl}
Let $M,N\in\BC^{m\times m}$ be Hermitian. Then for all integers $j,k>0$ we have
\begin{align}\label{WI}
  \la_{j+k-m}(M+N) & \leq \la_{j}(M)+\la_k(N)\leq \la_{j+k-1}(M+N).
\end{align}
\end{theorem}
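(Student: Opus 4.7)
The plan is to derive both halves of \eqref{WI} from the Courant--Fischer min--max characterization. Recall that for any Hermitian $H\in\BC^{m\times m}$ with eigenvalues $\la_1(H)\leq\cdots\leq \la_m(H)$ one has the dual representations
\[
\la_j(H)=\min_{\dim S = j}\ \max_{0\neq x\in S}\frac{\inn{Hx}{x}}{\|x\|^2}=\max_{\dim S = m-j+1}\ \min_{0\neq x\in S}\frac{\inn{Hx}{x}}{\|x\|^2},
\]
which is proved directly by testing the outer optimization on the span of the first $j$ (respectively last $m-j+1$) eigenvectors of $H$, and using orthogonality to show no competing subspace can do better. With this in hand, I would first establish the right inequality $\la_j(M)+\la_k(N)\leq \la_{j+k-1}(M+N)$ and then deduce the left inequality by the reflection $H\mapsto -H$, which reverses the ordering via $\la_\ell(-H)=-\la_{m-\ell+1}(H)$.

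For the right inequality, let $U$ be the span of eigenvectors of $M$ for $\la_j(M),\la_{j+1}(M),\ldots,\la_m(M)$, so $\dim U=m-j+1$ and $\inn{Mx}{x}\geq \la_j(M)\|x\|^2$ for all $x\in U$; symmetrically pick $V$ of dimension $m-k+1$ for $N$ on which $\inn{Nx}{x}\geq \la_k(N)\|x\|^2$. The standard dimension count gives
\[
\dim(U\cap V)\geq (m-j+1)+(m-k+1)-m=m-(j+k-1)+1,
\]
so $U\cap V$ contains a subspace $S$ of exactly dimension $m-(j+k-1)+1$. On $S$ the Rayleigh quotient of $M+N$ is bounded below by $\la_j(M)+\la_k(N)$, and inserting this $S$ into the max--min form of Courant--Fischer yields $\la_{j+k-1}(M+N)\geq \la_j(M)+\la_k(N)$, which is the desired bound whenever $1\leq j+k-1\leq m$; for $j+k-1>m$ the stated convention $\la_\ell(H)=+\infty$ makes the inequality trivial.

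The left inequality follows by applying what has just been proved to $-M$ and $-N$ with index pair $(m-j+1,m-k+1)$, giving
\[
\la_{m-j+1}(-M)+\la_{m-k+1}(-N)\leq \la_{(m-j+1)+(m-k+1)-1}\bigl(-(M+N)\bigr),
\]
and then re-indexing via $\la_\ell(-H)=-\la_{m-\ell+1}(H)$, which after simplification produces $\la_{j+k-m}(M+N)\leq \la_j(M)+\la_k(N)$. The case $j+k-m<1$ is absorbed by the convention $\la_\ell(H)=-\infty$ for $\ell<1$. The only delicate point is keeping the index arithmetic consistent with the increasing-order convention; no deeper matrix-analytic input beyond Courant--Fischer and an elementary subspace intersection bound is needed.
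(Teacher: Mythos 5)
Your proof is correct. Note, however, that the paper does not prove this statement at all---it is quoted as a known classical fact, namely Theorem 4.3.7 of \cite{HJ85}---so there is no internal proof to compare against; your Courant--Fischer argument with the subspace intersection dimension count is precisely the standard proof given in that reference, the index arithmetic in the reflection step $\la_\ell(-H)=-\la_{m-\ell+1}(H)$ checks out, and your treatment of the out-of-range indices via the conventions $\la_\ell(H)=-\infty$ for $\ell<1$ and $\la_\ell(H)=+\infty$ for $\ell>m$ is consistent with the conventions the paper sets up just before stating the theorem.
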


When the number of positive and negative eigenvalues of the perturbation $N$ are known, we have the following result.

\begin{corollary}\label{C:Weyl}
Let $M,N\in\BC^{m\times m}$ be Hermitian. Assume $r_-$ and $r_+$ are the number of negative and positive eigenvalues of $N$, multiplicities taken into account. Then for any integer $j\geq 0$ we have
\begin{align*}
&\la_{j-r_+}(M+N) \leq \la_j(M)\leq \la_{j+r_- }(M+N), \\
&\quad\la_{j-r_-}(M) \leq \la_j(M+N)\leq \la_{j+r_+ }(M).
\end{align*}
\end{corollary}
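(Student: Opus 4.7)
The plan is to derive the corollary as a direct consequence of Weyl's Inequality (Theorem \ref{T:Weyl}) by choosing the index $k$ in \eqref{WI} in terms of the inertia of $N$. The key observation is that if $N$ has $r_+$ positive and $r_-$ negative eigenvalues, then it has exactly $m - r_+ - r_-$ zero eigenvalues, and under the increasing ordering $\la_1(N) \leq \cdots \leq \la_m(N)$ this gives the sign information
\[
\la_k(N) \leq 0 \text{ for } k \leq m - r_+, \qquad \la_k(N) \geq 0 \text{ for } k \geq r_- + 1.
\]

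For the upper bound $\la_j(M) \leq \la_{j+r_-}(M+N)$ in the first pair, I would apply the right-hand side of Weyl's Inequality \eqref{WI} with $k = r_- + 1$, obtaining
\[
\la_j(M) + \la_{r_- + 1}(N) \leq \la_{j + r_-}(M+N),
\]
and then drop the nonnegative term $\la_{r_- + 1}(N) \geq 0$. For the lower bound $\la_{j-r_+}(M+N) \leq \la_j(M)$, I would apply the left-hand side of \eqref{WI} with $k = m - r_+$, obtaining
\[
\la_{j - r_+}(M+N) \leq \la_j(M) + \la_{m - r_+}(N) \leq \la_j(M),
\]
now using $\la_{m - r_+}(N) \leq 0$. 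The boundary cases (indices out of range) are handled automatically by the conventions $\la_i = -\infty$ for $i < 1$ and $\la_i = +\infty$ for $i > m$.

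The second pair of inequalities follows immediately by applying the first pair to the Hermitian pair $(M', N') := (M + N, -N)$: since $-N$ has $r_+$ negative and $r_-$ positive eigenvalues, swapping the roles of $r_+$ and $r_-$ yields
\[
\la_{j - r_-}(M' + N') \leq \la_j(M') \leq \la_{j + r_+}(M' + N'),
\]
and $M' + N' = M$, $M' = M + N$ give the desired bounds. There is no substantive obstacle here; the main care needed is bookkeeping of indices and being explicit about the edge cases where $j \pm r_\pm$ falls outside $\{1, \ldots, m\}$, which the stated conventions absorb.
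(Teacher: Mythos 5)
Your argument is correct and is essentially identical to the paper's proof: the same sign observation on the ordered eigenvalues of $N$, the same choices $k=m-r_+$ and $k=r_-+1$ in Weyl's inequality \eqref{WI}, and the same substitution $(M+N,-N)$ to obtain the second pair of inequalities. The only (cosmetic) difference is that the paper treats the definite cases $r_+=m$ and $r_-=m$ separately via \cite{S92}, because there the required index $k=m-r_+=0$ or $k=r_-+1=m+1$ falls outside the hypothesis $j,k>0$ of Theorem \ref{T:Weyl}; in your version these cases are still covered, since the corresponding bound then reads $\la_{j-r_+}(M+N)=-\infty$ or $\la_{j+r_-}(M+N)=+\infty$ and is vacuously true.
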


\begin{proof}[\bf Proof]
If $r_+=m$ or $r_-=m$, then $N$ is positive definite or negative definite, respectively, and the validity of the claim follows from Theorem 4 in \cite{S92}. Hence assume $r_+\neq m$ and $r_-\neq m$. For $j=0$ it is easily verified that the inequalities hold. Let $j> 0$. Note that $\la_{k}(N)\leq 0$ when $k\leq m-r_+$. Therefore, using \eqref{WI} with $k=m-r_+$, we have
\[
\la_{j-r_+}(M+N)=\la_{j-(m-r_+)-m}(M+N) \leq \la_j(M) +\la_{m-r_+}(N)\leq \la_j(M).
\]
Moreover, we have $\la_k(N)\geq 0$ for $k\geq r_-+1$, so that \eqref{WI} with $k=r_-+1$ yields
\[
\la_{j+r_-}(M+N)=\la_{j+(r_-+1)-1}(M+N)\geq \la_{j}(M) +\la_{r_+ +1}(N)\geq \la_{j}(M).
\]
This proves the first pair of inequalities. For the second set of inequalities, apply the same argument with $M$ and $N$ replaced by $M+N$ and $-N$, respectively, noting that $-N$ has $r_+$ negative eigenvalues and $r_-$ positive eigenvalues, multiplicities taken into account.
\end{proof}

\begin{theorem}[Cauchy Interlacing Theorem, Theorem 4.3.15 in \cite{HJ85}]\label{T:Cauchy}
For a Hermitian $H\in\BC^{(m+k) \times (m+k)}$, partitioned accordingly as
\begin{equation}\label{Hdec}
H=\mat{cc}{M& K\\ K^*& N},
\end{equation}
we have
\begin{equation}\label{HM-EigIneq}
\la_j(H) \leq \la_j(M)\leq \la_{j+k}(H),\quad j\geq 0.
\end{equation}
\end{theorem}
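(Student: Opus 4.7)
The plan is to prove both inequalities using the Courant--Fischer min--max characterization of eigenvalues of a Hermitian matrix, viewing $M$ as the compression of $H$ to the coordinate subspace $S := \BC^m \oplus \{0\} \subset \BC^{m+k}$. Recall that for a Hermitian matrix $T \in \BC^{N \times N}$ one has the dual formulas
\[
\la_j(T) = \min_{\dim V = j}\ \max_{0 \neq y \in V}\frac{y^* T y}{y^* y}= \max_{\dim V = N - j + 1}\ \min_{0 \neq y \in V}\frac{y^* T y}{y^* y}.
\]
The essential observation is that for $y = \sbm{x\\ 0}\in S$ we have $y^* H y = x^* M x$ and $\|y\|=\|x\|$, so every $j$-dimensional subspace $W \subset \BC^m$ lifts to a $j$-dimensional subspace $\wtil W := W \oplus \{0\} \subset \BC^{m+k}$ with identical Rayleigh quotients.

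For the upper bound $\la_j(M) \leq \la_{j+k}(H)$, I would apply the max--min form of $\la_{j+k}(H)$, which is a maximum over subspaces of dimension $(m+k)-(j+k)+1 = m-j+1$. For any $(m-j+1)$-dimensional $W \subset \BC^m$, the lift $\wtil W$ has the same dimension and yields the same minimum Rayleigh quotient as $W$ does for $M$. Taking the maximum over all such $W$ and using that $\la_{j+k}(H)$ is a maximum over a larger family of subspaces gives $\la_{j+k}(H) \geq \la_j(M)$.

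For the lower bound $\la_j(H) \leq \la_j(M)$, assuming $1 \leq j \leq m$, I would dually apply the min--max form of $\la_j(H)$, which is a minimum over $j$-dimensional subspaces of $\BC^{m+k}$. Every $j$-dimensional $W \subset \BC^m$ again lifts to $\wtil W \subset S$ of dimension $j$, on which the maximum Rayleigh quotient of $H$ equals that of $M$ on $W$. Restricting the minimization for $\la_j(H)$ to subspaces of $S$ only enlarges the infimum, so
\[
\la_j(H) \leq \min_{\substack{\dim W = j \\ W \subset \BC^m}}\ \max_{0 \neq x \in W}\frac{x^* M x}{x^* x}= \la_j(M).
\]

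The main technical point is nothing deep: it is simply the bookkeeping of which index in the dual formula corresponds to which subspace dimension, so that the codimension shift by $k$ between the two matrices matches the shift between $\la_j(H)$ and $\la_{j+k}(H)$. The boundary cases $j \leq 0$ and $j > m$ are handled by the conventions $\la_j = \pm\infty$ adopted earlier in the excerpt, which render the asserted inequalities trivially valid.
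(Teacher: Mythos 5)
Your proof is correct: the Courant--Fischer bookkeeping is done properly in both directions (the lift $W \mapsto W \oplus \{0\}$ preserves dimension and Rayleigh quotients, the index shift $(m+k)-(j+k)+1 = m-j+1$ matches the max--min formula for $\la_j(M)$, and the $\pm\infty$ conventions handle $j\le 0$ and $j>m$). The paper itself gives no proof --- it cites the result directly from Horn and Johnson --- and your argument is essentially the standard proof found in that reference, so there is nothing to compare beyond noting agreement.
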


Using the above results, we can prove the following result for the poles and zeros of functions in $\PRO_m$.

\begin{theorem}\label{T:Interlace}
Let $F\in\PRO_m$ be given by a minimal state space realization \eqref{RealFormPRO2}-\eqref{RealFormPROcon2}, so that $F^{-1}$ has a minimal state space realization as in Theorem \ref{T:inv2}. Then for any integer $j\geq 0$ we have
\begin{equation}\label{EigIneq}
\begin{aligned}
&\la_{j-\frac{m_2}{2}-m_3}(iA_\tu{inv})\leq \la_j(iA) \leq \la_{j+1}(iA)\leq \la_{j+\frac{m_2}{2}+m_1+1}(iA_\tu{inv}), \\
&\la_{j-\frac{m_2}{2}-m_1}(iA)\leq \la_j(iA_\tu{inv}) \leq \la_{j+1}(iA_\tu{inv})\leq \la_{j+\frac{m_2}{2}+m_3+1}(iA).
\end{aligned}
\end{equation}
In particular, if $0\leq \om_j<\om_{j+1}$ are such that $i \om_j$ and $i\om_{j+1}$ are subsequent poles of $F$, then in the interval $(i \om_j,i\om_{j+1})$ on $i\BR$ $F$ can have zeros whose multiplicities do not add up to more than $m$. Moreover, if $0\leq \nu_j<\nu_{j+1}$ are such that $i \nu_j$ and $i\nu_{j+1}$ are subsequent zeros of $F$, then in the interval $(i \nu_j,i\nu_{j+1})$ on $i\BR$ $F$ can have poles whose multiplicities do not add up to more than $m$.
\end{theorem}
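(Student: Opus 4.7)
The strategy is to monitor the spectrum through the three-step chain (i)--(iii) that relates $A$ to $A_\tu{inv}$. Since $A$, $\whatA$, $\wtilA$ and $A_\tu{inv}$ are all real and skew-symmetric, the matrices $iA$, $i\whatA$, $i\wtilA$, $iA_\tu{inv}$ are Hermitian, so the variational principles in Theorems \ref{T:Weyl} and \ref{T:Cauchy} and Corollary \ref{C:Weyl} apply directly. Step (i) is a Hermitian perturbation $iA = i\whatA + N$ with $N := iB_2D_{22}^{-1}B_2^T = B_2(iD_{22}^{-1})B_2^T$; since $D_{22}$ is real, skew-symmetric and invertible, the eigenvalues of $iD_{22}^{-1}$ come in pairs $\pm\nu\in\BR$, so $iD_{22}^{-1}$ has exactly $m_2/2$ positive and $m_2/2$ negative eigenvalues, and Sylvester's law of inertia then bounds both the positive and the negative inertia of $N$ by $m_2/2$. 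Step (ii) exhibits $i\whatA$ as the leading $n\times n$ principal submatrix of $i\wtilA$; step (iii) writes $iA_\tu{inv}=\Ga^T(i\wtilA)\Ga$ with $\Ga$ an isometry, so extending $\Ga$ to an orthogonal matrix realises $iA_\tu{inv}$ as a principal submatrix of a unitary conjugate of $i\wtilA$. Cauchy interlacing therefore applies to both compressions.

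Applying Corollary \ref{C:Weyl} with $r_\pm\leq m_2/2$ yields
\[
\la_{j-m_2/2}(iA)\leq \la_j(i\whatA)\leq \la_{j+m_2/2}(iA),\qquad \la_{j-m_2/2}(i\whatA)\leq \la_j(iA)\leq \la_{j+m_2/2}(i\whatA),
\]
and Theorem \ref{T:Cauchy} gives
\[
\la_j(i\wtilA)\leq \la_j(i\whatA)\leq \la_{j+m_1}(i\wtilA),\qquad \la_j(i\wtilA)\leq \la_j(iA_\tu{inv})\leq \la_{j+m_3}(i\wtilA).
\]
Chaining these telescopically produces, for instance,
\[
\la_{j+1}(iA)\leq \la_{j+1+m_2/2}(i\whatA)\leq \la_{j+1+m_2/2+m_1}(i\wtilA)\leq \la_{j+1+m_2/2+m_1}(iA_\tu{inv}),
\]
which is the upper bound on $\la_{j+1}(iA)$ in \eqref{EigIneq}; the remaining three estimates in \eqref{EigIneq} follow by entirely analogous telescopes, swapping the roles of $i\whatA$ and $iA_\tu{inv}$ where needed.

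For the multiplicity claim, let $i\om_j<i\om_{j+1}$ be subsequent poles of $F$ and index the eigenvalues of $iA$ so that $\la_a(iA)=\cdots=\la_{a+p-1}(iA)=\om_j$ (where $p$ is the pole-multiplicity) and $\la_{a+p}(iA)=\om_{j+1}$. Let $\la_e(iA_\tu{inv}),\ldots,\la_{e+N-1}(iA_\tu{inv})$ be all eigenvalues of $iA_\tu{inv}$ strictly in $(\om_j,\om_{j+1})$, so that $N$ is exactly the total zero-multiplicity of $F$ in that open interval. The second line of \eqref{EigIneq} at index $e-1$ gives $\om_j<\la_e(iA_\tu{inv})\leq \la_{e+m_2/2+m_3}(iA)$, forcing $e+m_2/2+m_3\geq a+p$. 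The first line at index $a+p-1$ gives $\om_{j+1}=\la_{a+p}(iA)\leq \la_{a+p+m_2/2+m_1}(iA_\tu{inv})$, and together with $\la_{e+N-1}(iA_\tu{inv})<\om_{j+1}$ this forces $e+N\leq a+p+m_2/2+m_1$. Subtracting the two bounds yields $N\leq m_1+m_2+m_3=m$. The dual claim on poles between consecutive zeros follows by the same argument with $iA$ and $iA_\tu{inv}$ (and $m_1$ and $m_3$) interchanged.

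The main obstacle is the \emph{symmetry} of the perturbation bound in step (i): both the positive and the negative inertia of $N$ must be controlled by $m_2/2$ for the additive slacks on the two sides of \eqref{EigIneq} to close up to exactly $m=m_1+m_2+m_3$. This is exactly what the skew-symmetry of $D_{22}$ provides, and it is the one place where the specific algebraic structure of the realization in Theorem \ref{T:Transfer} enters the argument, rather than just its generic descriptor form.
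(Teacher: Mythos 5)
Your proof is correct and follows essentially the same route as the paper: the same three-step chain $A\to\whatA\to\wtilA\to A_\tu{inv}$, with Corollary \ref{C:Weyl} handling the rank-$m_2$ skew-symmetric perturbation (whose inertia splits as $m_2/2$ positive and $m_2/2$ negative eigenvalues) and Theorem \ref{T:Cauchy} handling the extension and compression steps, chained to give \eqref{EigIneq}. Your explicit index-counting argument deriving the bound $N\leq m_1+m_2+m_3=m$ for the ``in particular'' clause is a welcome addition, as the paper leaves that deduction implicit.
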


We should remark here, that, unlike in the scalar case, for $m>1$ it is possible that poles and zeros of $F\in\PRO_m$ occur at the same point on $i\BR$. Hence, as in the theorem, if $i \om_j$ and $i\om_{j+1}$ are subsequent poles of $F$, then zeros with a multiplicities adding up to at most $m$ can occur between $i \om_j$ and $i\om_{j+1}$, but the theorem does not exclude the possibility that $F$ also has zeros at $i \om_j$ and $i\om_{j+1}$.

\begin{proof}[\bf Proof of Theorem \ref{T:Interlace}]
Let $A$, $\what{A}$, $\wtil{A}$ and $A_\tu{inv}$ be as in steps (i)-(iii) above. Then  $iA$, $i\what{A}$, $i\wtil{A}$ and $iA_\tu{inv}$ are Hermitian, hence with eigenvalues on $\BR$ which are mirrored in 0 because the matrices $A$, $\what{A}$, $\wtil{A}$ and $A_\tu{inv}$ are real skew-symmetric. Also, the perturbation $\La:=-B_2 D_{22}^{-1}B_2^T$ in step (i) is real skew-symmetric and has a rank of at most $m_2$ so that $i\La$ has at most $m_2/2$ positive eigenvalues and at most $m_2/2$ negative eigenvalues. Therefore, by Corollary \ref{C:Weyl} we have
\[
\la_{j-\frac{m_2}{2}}(i\what{A})\leq \la_j(iA)\ands
\la_{j+1}(iA)\leq \la_{j+\frac{m_2}{2}+1}(i\what{A}).
\]
Since $\widetilde{A}=\sbm{\what{A} & \star \\ \star & \star}\in\BR^{(n+m_1)\times (n+ m_1)}$, we can apply Theorem \ref{T:Cauchy} to obtain
\[
\la_{j-\frac{m_2}{2}}(i\wtil{A}) \leq \la_{j-\frac{m_2}{2}}(i\what{A}) \ands
\la_{j+\frac{m_2}{2}+1}(i\what{A})\leq \la_{j+\frac{m_2}{2}+m_1+1}(i\wtil{A}).
\]
Furthermore, after a change of basis, we can write $\widetilde{A}=\sbm{A_\tu{inv} & \star \\ \star & \star}$ with $\widetilde{A}$ of size $(n+m_1)\times (n+ m_1)$ and $A_\tu{inv}$ of size $(n+m_1-m_3)\times (n+ m_1-m_3)$. Hence, again applying  Theorem \ref{T:Cauchy} we obtain
\[
\la_{j-\frac{m_2}{2}-m_3}(iA_\tu{inv})\leq \la_{j-\frac{m_2}{2}}(i\wtil{A}) \ands
\la_{j+\frac{m_2}{2}+m_1+1}(i\wtil{A})\leq \la_{j+\frac{m_2}{2}+m_1+1}(iA_\tu{inv}).
\]
Putting these inequalities together we find that
\[
\la_{j-\frac{m_2}{2}-m_3}(iA_\tu{inv})\leq \la_{j-\frac{m_2}{2}}(i\wtil{A}) \leq \la_{j-\frac{m_2}{2}}(i\what{A})\leq \la_j(iA)
\]
and
\[
\la_{j+1}(iA)\leq \la_{j+\frac{m_2}{2}+1}(i\what{A})\leq \la_{j+\frac{m_2}{2}+m_1+1}(i\wtil{A})\leq \la_{j+\frac{m_2}{2}+m_1+1}(iA_\tu{inv}).
\]
Hence we proved the first set of inequalities in \eqref{EigIneq}. The second set of inequalities in \eqref{EigIneq} follows by a similar analysis, reversing the construction from $A$ to $A_\tu{inv}$.
\end{proof}

Note that it may happen that the perturbation $\La=-B^2 D_{22}^{-1}B_2^T$ has rank $2d< m_2$. In this case, the proof shows that the inequalities in \eqref{EigIneq} can be improved by replacing $m_2/2$ by $d$.

We conclude this paper with an example illustrating the pole-zero properties of $\PRO_m$ functions.

\begin{example}\label{E:pole-zero}
Consider $F\in \PRO_2$ given in the state space realization form of Theorem \ref{T:Transfer} with
\begin{align*}
    A &= \diag \left(\begin{bmatrix}0 & 1 \\ -1 & 0 \end{bmatrix}, \begin{bmatrix}0 & 2 \\ -2 & 0 \end{bmatrix},\begin{bmatrix}0 & 3 \\ -3 & 0 \end{bmatrix},\begin{bmatrix}0 & 4 \\ -4 & 0 \end{bmatrix},\begin{bmatrix}0 & 1 \\ -1 & 0 \end{bmatrix},\begin{bmatrix}0 & 5 \\ -5 & 0 \end{bmatrix} \right),\\
    B^T&=\sbm{0 & \frac{1}{10000} & \frac{1}{10} & \frac{5}{1000} & 0 & 0 &\frac{5}{1000} & 0 & 0 & 0 & 0 &\frac{1}{1000} \\ 1000 & 0 & 1 & 0 & 0 & \frac{1}{10000} & 0 & 0 & 0 & \frac{1}{1000} & 0 & 0 }, \quad D=\begin{bmatrix}0 & 50 \\ -50 & 0 \end{bmatrix}, M=0.
\end{align*}
 Then $m=m_2=2$ and $m_1=m_3=0$. Hence $F$ has no pole and no zero at $\infty$. One can verify that
 \begin{equation*}
    \rank \begin{bmatrix}B & AB & A^2B & \hdots & A^{11}B \end{bmatrix} =12.
\end{equation*}
Hence $(A,B)$ is a controllable pair. In particular, the state space realization in \eqref{RealFormPRO} is minimal, so that the (finite) poles of $F$ coincide with the eigenvalues of $A$:
\[
\pm 5i,\ \pm 4i,\ \pm 3i,\ \pm 2i,\ \pm 1i \mbox{ (multiplicity 2)}.
\]
In this case, since $m_1=m_3=0$, the state matrix of $F^{-1}$ is given by
\[
A_\tu{inv}=A-B D^{-1}B^T,
\]
a rank 2 perturbation of $A$. Using Matlab we found the eigenvalues of $A_\tu{inv}$ to be
\begin{align*}
-\lambda_{1}\left(A_{\textup{inv}} \right) = \lambda_{12}\left(A_{\textup{inv}}\right) = 5.000052 i,\quad
-\lambda_{2}\left(A_{\textup{inv}} \right) = \lambda_{11}\left(A_{\textup{inv}}\right) = 4.002068 i, \\
-\lambda_{3}\left(A_{\textup{inv}} \right) = \lambda_{10}\left(A_{\textup{inv}}\right) = 3.00000000000012 i,\
-\lambda_{4}\left(A_{\textup{inv}} \right) = \lambda_{9}\left(A_{\textup{inv}}\right) = 2.921053 i, \\
-\lambda_{5}\left(A_{\textup{inv}} \right) = \lambda_{8}\left(A_{\textup{inv}}\right) = 1 i,\quad
-\lambda_{6}\left(A_{\textup{inv}} \right) = \lambda_{7}\left(A_{\textup{inv}}\right) = 0.682921 i,
\end{align*}
which correspond to the zeros of $F$. It follows that there is one zero below $-5i$, one in each of the intervals $(-5i,-4i)$, $(-4i,-3i)$ and $(-3i,-2i)$ and two in the interval $(-i,i)$, with $\pm i$ the only points on $i\BR$ where both a pole and a zero coexist (although $\pm 3 i$ may have been missed as a zero by a round off error). The example shows, in particular, that it may occur that between two subsequent poles, there are zeros with multiplicities that add up to $m=2$, while these two poles are also zeros of $F$.
\end{example}

\paragraph{\bf Acknowledgments}
This work is based on research supported in part by the National Research Foundation of South Africa (NRF) and the DSI-NRF Centre of Excellence in Mathematical and Statistical Sciences (CoE-MaSS). Any opinion, finding and conclusion or recommendation expressed in this material is that of the authors and the NRF and CoE-MaSS do not accept any liability in this regard.

\end{document}